\newtheorem{thm}{Theorem}[section]
\newtheorem{lem}[thm]{Lemma}
\theoremstyle{definition} 
\newtheorem{rem}[thm]{Remark}
\newcommand{\Om}{\Omega}
\newcommand{\Ombar}{\bar \Omega}
\newcommand{\domain}{G}
\newcommand{\domainbar}{\bar G}
\newcommand{\dOm}{\partial\Om}
\newcommand{\delny}{\partial_{\nu}}
\newcommand{\norm}[2][]{\|#2\|_{#1}}
\newcommand{\Lom}[1]{L^{#1}(\Om)}
\newcommand{\Wom}[2]{W^{#1,#2}(\Om)}
\newcommand{\Tmax}{T_{\mathrm{max}}}
\newcommand{\f}[2]{\frac{#1}{#2}}
\newcommand{\ddt}{\frac{\mathrm{d}}{\mathrm{d}t}}%or whichever formatting is typographically correct ...
\newcommand{\io}{\int_{\Om}}
\newcommand{\nn}{\nonumber}
\newcommand{\kl}[1]{\left(#1\right)}
\newcommand{\set}[1]{\{#1\}}
\newcommand{\matr}[1]{\begin{pmatrix}#1\end{pmatrix}}
\newcommand{\dy}{\,\mathrm{d}y}
\newcommand{\mc}[1]{\mathcal{#1}}
\newcommand{\ur}[1]{\mathrm{#1}}
\newcommand{\ure}{\ur e}
\newcommand{\leb}[2][\Omega]{{L^{#2}(#1)}}
\newcommand{\sob}[3][\Omega]{{W^{#2, #3}(#1)}}
\newcommand{\con}[2][\Ombar]{{C^{#2}(#1)}}
\newcommand{\hp}{\hphantom}
\newcommand{\pe}{\mathrel{\hp{=}}}
\newcommand{\eps}{\varepsilon}
\newcommand{\gt}{>}
\newcommand{\lt}{<}
\newcommand{\tmax}{T_{\max}}
\newcommand{\defs}{\coloneqq}
\newcommand{\R}{\mathbb{R}}
\newcommand{\N}{\mathbb{N}}
\DeclareMathOperator{\supp}{supp}
\DeclareMathOperator{\dist}{dist}
\author[1]{Tobias~Black\footnote{e-mail: tblack@math.upb.de}}
\author[2]{Mario~Fuest\footnote{e-mail: fuest@ifam.uni-hannover.de}}
\author[2]{ \\Johannes~Lankeit\footnote{e-mail: lankeit@ifam.uni-hannover.de}}
\author[3]{Masaaki~Mizukami\footnote{e-mail: masaaki.mizukami.math@gmail.com}}
\affil[1]{Institut f\"ur Mathematik, Universit\"at Paderborn, Warburger Str.\ 100, 33098 Paderborn, Germany}
\affil[2]{Leibniz Universität Hannover, Institut für Angewandte Mathematik,\newline Welfengarten 1, 30167 Hannover, Germany}
\affil[3]{Department of Mathematics, Faculty of Education, 
Kyoto University of Education, \newline 1, Fujinomori, Fukakusa, Fushimi-ku, Kyoto 612-8522, Japan} 
\date{}
\title{Possible points of blow-up in chemotaxis systems with spatially heterogeneous logistic source}
\DeclareOldFontCommand{\it}{\normalfont\itshape}{\mathit}
\begin{document}
\maketitle  

\renewcommand{\thefootnote}{\fnsymbol{footnote}} 
\footnote[0]
    {2020{\it Mathematics Subject Classification}\/. 
    35B44, 35K55, 92C17.
    }
\footnote[0]
    {\it Key words and phrases\/: 
    chemotaxis; logistic source; spatial heterogeneity; blow-up set; spatially local bounds. 
    }

\KOMAoptions{abstract=true}
\begin{abstract}
\noindent
We discuss the influence of possible spatial inhomogeneities in the coefficients of logistic source terms in  parabolic--elliptic chemotaxis-growth systems of the form
\begin{align*}
 u_t &= \Delta u - \nabla\cdot(u\nabla v) + \kappa(x)u-\mu(x)u^2, \\
 0 &= \Delta v - v + u
\end{align*}
in smoothly bounded domains $\Omega\subset\mathbb{R}^2$. Assuming that the coefficient functions satisfy $\kappa,\mu\in C^0(\overline{\Omega})$ with $\mu\geq0$ we prove that finite-time blow-up of the classical solution can only occur in points where $\mu$ is zero, i.e.\ that the blow-up set $\mathcal{B}$ is contained in
\begin{align*}
\big\{x\in\overline{\Omega}\mid\mu(x)=0\big\}.
\end{align*}
Moreover, we show that whenever $\mu(x_0)>0$ for some $x_0\in\overline{\Omega}$, then one can find an open neighbourhood $U$ of $x_0$ in $\overline{\Omega}$ such that $u$ remains bounded in $U$ throughout evolution.
\end{abstract} \newpage

\section{Introduction}
The interplay between chemotaxis -- motion partially directed in response to concentration differences of a chemical signal -- and population growth can cause quite colourful dynamics \cite{PainterHillenPhysicaD2011,WoodwardEtAlSpatiotemporalPatternsGenerated1995,LankeitChemotaxisCanPrevent2015}. 

In addition, if organisms interact with their surroundings, it is reasonable to assume that -- except in very specific experimental conditions -- the environment is not spatially homogeneous and that, accordingly, terms modelling this interaction should include some space dependence. A question that then becomes immediate for the analysis of such models is that to what extent spatial heterogeneity influences the behaviour of their solutions. 

In this article, we will consider the following chemotaxis-growth system: 
\begin{subequations}\label{system}
\begin{align}
 u_t &= Δu - ∇\cdot(u∇v) + κ(x)u-μ(x)u^2 && \text{in } \Om\times(0,T) \label{system:u}\\
 0 &= Δv - v + u && \text{in } \Om\times(0,T) \label{system:v}\\
 \delny u &= 0 = \delny v && \text{on } \dOm\times(0,T)\label{system:bc}\\
 u(\cdot,0)&=u_0 && \text{in } \Om \label{system:ic} 
\end{align}
\end{subequations}
in a smooth, bounded domain $\Omega\subset ℝ^n$, $n=2$, where the coefficients $\kappa$ and $\mu$ of the growth of logistic type may be spatially inhomogeneous, and will investigate \textit{where} blow-up can ensue. \\

\textbf{Logistic terms in chemotaxis models.}
Compared to the classical Keller--Segel system (whose parabolic--elliptic version is given by \eqref{system} upon setting $κ\equiv 0$, $μ\equiv 0$), which is well-known for its ability to produce solutions blowing up for some and globally existing solutions for other initial data \cite{JL92,Nagai_95,herreroSingularityPatternsChemotaxis1996,NagainSenba_98,Biler_JAMA99,NagaiBlowupNonradialSolutions2001,nagaiGlobalExistenceBlowup2001}, the inclusion of logistic terms (biologically meaningful inter alia when modelling population dynamics \cite{ShigesadaEtAlSpatialSegregationInteracting1979,HillenPainterUserGuidePDE2009}, embryogenesis \cite{PainterEtAlDevelopmentApplicationsModel2000}, tumour invasion processes \cite{ChaplainLolasMathematicalModellingCancer2005} or pattern formation in colonies of \textit{salmonella typhimurium} \cite{WoodwardEtAlSpatiotemporalPatternsGenerated1995}) usually facilitates global existence proofs: If, for example, $κ$ and $μ$ in \eqref{system} are constant and $μ>\f{n-2}n$, then solutions are globally classical and bounded \cite{TelloWinklerChemotaxisSystemLogistic2007}; similarly, in the parabolic--parabolic analogue of \eqref{system}, classical solutions are global if $μ$ is a positive constant and $n=2$ \cite{OsakiEtAlExponentialAttractorChemotaxisgrowth2002} or if $n\ge 3$ and $μ$ is sufficiently large \cite{WinklerBoundednessHigherdimensionalParabolic2010,XiangHowStrongLogistic2018,XiangChemotacticAggregationLogistic2018}. Also existence of weak solutions has been investigated in both parabolic--elliptic (e.g.\ \cite{TelloWinklerChemotaxisSystemLogistic2007,WinklerChemotaxisLogisticSource2008}) and parabolic--parabolic settings (see \cite{LankeitEventualSmoothnessAsymptotics2015,viglialoroVeryWeakGlobal2016}, and \cite{WinklerRoleSuperlinearDamping2019,YanFuestWhenKellerSegel2021,WinklerSolutionsParabolicKeller2020} for even more generalized solutions).
 
The presence of logistic growth and absorption terms of general logistic type, however, does not render the emergence of structure impossible: Not only have solutions been observed to surpass arbitrary population thresholds \cite{WinklerHowFarCan2014,LankeitChemotaxisCanPrevent2015,KangStevensBlowupGlobalSolutions2016,WinklerEmergenceLargePopulation2017}, but even blow-up can occur if the degradation is not too strong: For a simplified variant of \eqref{system} (with \eqref{system:v} reading $0=Δv-m(t)+u$, where $m(t)=\f1{|\Om|}\io u(\cdot,t)$), blow-up has been found for logistic terms of the form $+κu-μu^{α}$ if either $\alpha = 2$ and $\mu \in (0, \frac{n-4}{n})$ (and hence $n \ge 5$) or if $n \ge 3$, $\alpha < \min\{2, \frac{n}{2}\}$ and $\mu > 0$ is arbitrarily large~\cite{FuestApproachingOptimalityBlowup2021} (see also the precedents and related works \cite{WinklerBlowupHigherdimensionalChemotaxis2011, WinklerFinitetimeBlowupLowdimensional2018, BlackEtAlRelaxedParameterConditions2021, TanakaBoundednessFinitetimeBlowup2022, TanakaBlowupQuasilinearParabolic2022}, some of which also do not require the simplification of \eqref{system:v}).

\textbf{Spatial heterogeneity.}
While taking $κ$ and $μ$ to be constant is a useful simplification, in biological reality environmental conditions are rarely perfectly identical everywhere. 
Moreover, taking into account spatial variation in these conditions (as encoded in $x$-dependence of $κ$ and $μ$) can have noticeable influence also on qualitative behaviour of solutions: In \cite{DockeryEtAlEvolutionSlowDispersal1998}, for example, the inclusion of space-dependent reproduction rates in a competition model for two species with different diffusion speed led to extinction of one species, in contrast to the spatially homogeneous version featuring coexistence steady states.

In light of these observations, also chemotaxis-growth systems with non-constant $κ$ and $μ$ have been considered: 
Salako and Shen treated the case that $\Omega = \mathbb{R}^n$, $n\in \mathbb{N}$, and $κ$, $μ$ are positive functions depending on $x\in \mathbb{R}^n$ and $t\in \mathbb{R}$. They showed global existence and uniform persistence, asymptotic spreading of classical solutions \cite{SalakoShenParabolicellipticChemotaxisModel2018b}, established stability of strictly positive entire solutions \cite{SalakoShenParabolicellipticChemotaxisModel2018a} and found conditions for existence and non-existence of transition front solutions \cite{SalakoShenParabolicellipticChemotaxisModel2022}. For bounded domains $\Omega$ and $\kappa$, $\mu$ depending on the space variable only, global solutions have been constructed in \cite{YanFuestWhenKellerSegel2021} (within a generalized solution framework) and \cite{ArumugamEtAlGlobalExistenceSolutions2021} (for a related system), while solutions blowing up in finite time have been found in \cite{FuestFinitetimeBlowupTwodimensional2020, BlackEtAlRelaxedParameterConditions2021} (see also \cite{TanakaBlowupQuasilinearParabolic2022}).

\textbf{The location of blow-up.}
If strong aggregation (in the form of blow-up) has been observed in some model, one of the next questions is \textit{where} it takes place. In the two-dimensional parabolic--parabolic and parabolic--elliptic Keller--Segel model without sources, it has been shown in \cite{SenbaSuzukiChemotacticCollapseParabolicelliptic2001,NagaiSenbaSuzuki} that twice the number of blow-up points in the interior of the domain plus the number of blow-up points on the boundary may not exceed $\frac{\int_\Omega u_0}{4\pi}$. (In the parabolic--parabolic setting, one needs to additionally assume that the blow-up points are isolated, which is the case in radially symmetric settings, where blow-up can only occur at the origin, or when the Lyapunov function is bounded from below; see again \cite{NagaiSenbaSuzuki}). Thus, for initial data $u_0$ with $\int_\Omega u_0 \in [4\pi, 8\pi)$, blow-up can only occur at the boundary.
At least in the parabolic--elliptic model, such boundary blow-up has indeed been detected \cite[Theorem~3.2]{NagaiBlowupNonradialSolutions2001}.

Moreover, again for the two-dimensional parabolic--elliptic system, it is known that single-point blow-up may occur at any point in $\Ombar$ \cite{NagaiBlowupNonradialSolutions2001}.
Regarding the fully parabolic setting, \cite{HerreroVelazquezBlowupMechanismChemotaxis1997} shows the existence of a solution only blowing up at the center of the disc.
In fact, many available techniques for detecting blow-up require radially symmetry so that one often obtains blow up at the origin as a by-product.
Beyond that, blow-up points can be characterised by their temporal asymptotics \cite{MizoguchiSouplet}.
Apart from the location of blow-up points, many more properties of such nonglobal solutions have been studied; we refer to Section~2.2 of the survey \cite{LankeitWinklerFacingLowRegularity2019} and the references therein for further discussion.

All of these works do not deal with logistic terms. This is natural: In some cases, the methods of the proofs would not apply, 
for example, those in \cite{NagaiSenbaSuzuki} rest on a Lyapunov functional for the Keller--Segel system, which no longer is a Lyapunov functional for \eqref{system} if $κ$ and $μ$ differ from zero. Even more importantly, for planar domains any positive constant $μ$ suffices to guarantee boundedness of classical solutions \cite{TelloWinklerChemotaxisSystemLogistic2007}, as discussed above, or to ensure that solutions emerging from $L^1$ initial data immediately become regular \cite{LankeitImmediateSmoothingGlobal2020}. 

On the other hand, these latter results already indicate that also for nonconstant $μ$ one might rather expect blow-up in places where $μ$ vanishes. We show that this speculation indeed holds true: 

\begin{thm}\label{th:blowup_points}
  Let $\Omega \subset \R^2$ be as smooth, bounded domain, $\kappa, \mu \in \con0$ with $\mu \ge 0$ and $0 \le u_0 \in \con0$.
  Suppose $(u, v)$ solves \eqref{system} classically and blows up at $\tmax \lt \infty$.
  Then the blow-up set
  \begin{align*}
    \mc B \defs \left\{x \in \Ombar:
      \exists (x_j)_{j \in \N} \subset \Ombar,\, (t_j)_{j \in \N} \subset (0, \tmax) \mid
      x_j \to x, t_j \to \tmax, u(x_j, t_j) \to \infty
    \right\}
  \end{align*}
  is contained in $\big\{ x \in \Ombar \mid \mu(x) = 0\big\}$.
\end{thm}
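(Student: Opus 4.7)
The cleanest route is to prove the stronger local-boundedness statement announced in the abstract: whenever $\mu(x_0)\gt 0$, the solution $u$ is uniformly bounded on some open neighbourhood of $x_0$ throughout $(0,\tmax)$, from which the inclusion $\mc B \subset \{\mu = 0\}$ follows by contraposition. By continuity of $\mu$ choose $r \gt 0$ and $\mu_0 \gt 0$ with $\mu \geq \mu_0$ on $B_{3r}(x_0)\cap\Ombar$, and construct three nested nonnegative cutoffs $\eta_1,\eta_2,\eta_3\in C^\infty(\Ombar)$ with $\eta_k \equiv 1$ on $B_{kr}(x_0)\cap\Ombar$, $\supp \eta_k \subset B_{(k+1)r}(x_0)\cap\Ombar$ and $\delny\eta_k = 0$ on $\dOm$, the last condition being achievable via boundary straightening when $x_0 \in \dOm$. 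For the Young-type absorption step below, each $\eta_k$ should additionally satisfy $|\na\eta_k|\leq C\eta_k^{\tau}$ for some $\tau\in(0,1)$, as is automatic for cutoffs of the form $\chi^\ell$ with $\chi$ a standard bump and $\ell$ sufficiently large.

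\textbf{Localized $L^p$-estimate.} The central computation is to test \eqref{system:u} against $u^{p-1}\eta_1^{2p}$ for some $p\gt 1$ with $\f{p-1}{p}\lt \mu_0$ (e.g.\ $p = 1 + \tfrac{\mu_0}{2}$). Integration by parts combined with the pointwise identity $\Delta v = v-u$ from \eqref{system:v} converts the chemotactic cross term into a contribution of exactly $+\f{p-1}{p}\io u^{p+1}\eta_1^{2p}\dx$, which on $\supp \eta_1 \subset B_{2r}(x_0)$ is strictly dominated by the logistic dissipation $-\io \mu u^{p+1}\eta_1^{2p}\dx \leq -\mu_0\io u^{p+1}\eta_1^{2p}\dx$. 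The resulting differential inequality reads
\begin{align*}
  \ddt\io u^p \eta_1^{2p}\dx + \kl{\mu_0 - \tfrac{p-1}{p}}\io u^{p+1}\eta_1^{2p}\dx \leq \mc E(t),
\end{align*}
where $\mc E(t)$ collects the $\na\eta_1$-borne error terms from the two integrations by parts. Those involving $\na u$ are absorbed by Young's inequality into the gradient dissipation $\io u^{p-2}|\na u|^2\eta_1^{2p}\dx$; the remaining $\io u^p \eta_1^{2p-2}|\na\eta_1|^2\dx$ is then handled by a further Young's inequality, using $|\na\eta_1|\leq C \eta_1^{\tau}$ to absorb it into the good left-hand side up to an additive constant.

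\textbf{Handling $\na v$ and bootstrapping to $L^\infty$.} The one genuinely non-local term is $\io u^p \eta_1^{2p-1}\na v\cdot\na\eta_1\dx$. I would first derive a time-uniform $L^1$-bound on $u$ by integrating \eqref{system:u} and applying Gronwall on the finite interval $(0,\tmax)$; elliptic regularity for \eqref{system:v} together with the 2D Sobolev embedding then yields a time-uniform $L^q$-bound on $\na v$ for every $q\lt 2$. Hölder's inequality thus dominates the non-local error by a local integral of $u$ at a higher power on the annulus where $\na\eta_1\ne 0$, which -- crucially -- is contained in $\{\mu\geq\mu_0\}$ and hence is itself amenable to the same $L^p$-estimate with the enlarged cutoffs $\eta_2$ and $\eta_3$. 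This nested-cutoff bootstrap closes the differential inequality and delivers $u \in L^\infty((0,\tmax); L^p(B_r(x_0)\cap\Ombar))$ for some $p\gt 1$. A final Moser iteration on a slightly shrunken neighbourhood -- or equivalently a variation-of-constants representation with the Neumann heat semigroup applied to the inhomogeneity $\na\cdot(u\na v)+(\kappa-\mu u)u$ -- then upgrades this to the desired local $L^\infty$-bound.

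The main obstacle I expect is orchestrating the cutoff hierarchy against the nonlocality of $v$: each localization generates error terms whose control needs information about $u$ on a strictly larger set, so the bootstrap can only terminate cleanly because all the nested neighbourhoods can be fit inside the uniform goodness region $\{\mu \geq \mu_0\}$. Additional care is required at boundary points $x_0\in\dOm$ to ensure compatibility of the cutoffs with the Neumann condition $\delny v = 0$.
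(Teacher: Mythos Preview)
Your overall architecture matches the paper's: reduce Theorem~\ref{th:blowup_points} to a local $L^\infty$ bound near points with $\mu(x_0)>0$, establish a localized $L^p$ estimate for some $p>1$ by testing \eqref{system:u} against $u^{p-1}$ times a cutoff supported in $\{\mu\ge\mu_0\}$, upgrade via elliptic regularity to a local $W^{1,q}$ bound on $v$ with $q>2$, and finish with a semigroup argument. The boundary-compatible cutoffs with $|\nabla\eta|\le C\eta^\tau$ are exactly what the paper constructs.

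There is, however, a genuine gap in how you close the localized $L^p$ inequality. The problematic term $\io u^p\,\eta_1^{2p-1}\nabla\eta_1\cdot\nabla v$ cannot be controlled the way you propose. Globally you only have $\nabla v\in L^q(\Omega)$ for $q<2$, so H\"older forces a factor $\|u\|_{L^{pq'}(\text{annulus})}^p$ with $q'>2$; but the very mechanism that makes the $L^p$ estimate work---the inequality $\frac{p-1}{p}<\mu_0$---caps $p$ strictly below $\frac{1}{(1-\mu_0)_+}$, so when $\mu_0$ is small you \emph{cannot} run ``the same $L^p$-estimate'' at exponent $pq'>2$ on a larger cutoff, and the nested-cutoff bootstrap does not terminate. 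The paper circumvents this with a single cutoff and one additional integration by parts,
\[
\frac1p\io u^p\,\nabla\varphi\cdot\nabla v = -\io u^{p-1} v\,\nabla u\cdot\nabla\varphi - \frac1p\io u^p v\,\Delta\varphi,
\]
which replaces $\nabla v$ by $v$. The point is that in two dimensions $v\in L^s(\Omega)$ for \emph{every} $s<\infty$ (Sobolev from $v\in W^{1,p}$, $p<2$), so after Young's inequality the error becomes $C\io v^{2(p+1)}+C\io v^{p+1}$, which is bounded outright---no higher $u$-power and no enlarged cutoff are needed. This requires the sharper cutoff estimate $|\nabla\varphi|\le C_\varphi\varphi^{1-\eta}$, $|\Delta\varphi|\le C_\varphi\varphi^{1-2\eta}$ with $\eta=\frac{1}{2(p+1)}$, so that the combinations $\varphi^{-(2p+1)}|\nabla\varphi|^{2(p+1)}$ and $\varphi^{-p}|\Delta\varphi|^{p+1}$ stay bounded. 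As a secondary remark, the paper explicitly avoids a Moser iteration for the final step because each pass would shrink the neighbourhood; the semigroup alternative you mention is what it actually uses.
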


We prove this theorem by showing the following: 

\begin{thm}\label{th:main}
  Let $\Omega \subset \R^2$ be as smooth, bounded domain, $\kappa, \mu \in \con0$ with $\mu \ge 0$ and $0 \le u_0 \in \con0$.
  Suppose $(u, v)$ is a classical solution of \eqref{system} in $\Ombar\times [0, \tmax)$, where $\tmax \in (0, \infty]$.

  Then the following holds:
  Let $x_0 \in \Ombar$ with $\mu(x_0) > 0$ and $T \in (0, \tmax] \cap (0, \infty)$.
  Then there exist an open neighbourhood $U$ of $x_0$ in $\Ombar$ and $C \gt 0$ such that
  \begin{align*}
    \|u(\cdot, t)\|_{L^\infty(U)} \lt C 
    \qquad \text{for all $t \in (0, T)$}.
  \end{align*}
\end{thm}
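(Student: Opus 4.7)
The plan is to exploit the local positivity of $\mu$ near $x_0$ in three stages: global background estimates for $u$ and $\na v$; a localized mass bound, via cutoff testing and a Cauchy--Schwarz trick that turns the local logistic damping into effective dissipation; and finally a bootstrap of local $L^p$-bounds via Young's inequality, up to a regularity level at which a parabolic Moser iteration can finish the argument. Pick a relatively open neighborhood $V_0 \ni x_0$ in $\Ombar$ together with $\mu_0 > 0$ such that $\mu \ge \mu_0$ on $\overline{V_0}$; the target neighborhood $U$ will be a subset. Integrating \eqref{system:u} over $\Om$ and using $\mu \ge 0$ gives $\ddt \norm[\Lom{1}]{u} \le \norm[\infty]{\kappa}\,\norm[\Lom{1}]{u}$, so $\norm[\Lom{1}]{u(\cdot,t)}$ stays bounded on $(0,T)$; elliptic regularity for $-\Delta v + v = u$ with Neumann condition then provides a bound for $\norm[\Lom{2}]{\na v(\cdot,t)}$ on $(0,T)$.

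For the localized mass bound, choose $\psi \in C^\infty(\Ombar)$ with $\psi \equiv 1$ on a neighborhood of $x_0$, $\supp \psi \subset V_0$ and $\delny \psi = 0$ on $\dOm$. Testing \eqref{system:u} against $\psi^2$, integrating by parts twice, and invoking $\delny u = \delny v = 0$, the chemotactic cross term appears as $2\io \psi u\,\na\psi \cdot \na v$; Young's inequality
$$2|\psi u\,\na\psi \cdot \na v| \le \tfrac{\mu_0}{2}\psi^2 u^2 + \tfrac{2}{\mu_0}|\na\psi|^2 |\na v|^2$$
absorbs half of this cross term into the local logistic damping. Using the background $\Lom{1}$-estimate for $u$ and the $\Lom{2}$-estimate for $\na v$, this reduces the identity to
$$\ddt \io \psi^2 u + \tfrac{\mu_0}{2}\io \psi^2 u^2 \le C\bigl(1 + \io \psi^2 u\bigr).$$
Combined with $\io \psi^2 u^2 \ge \norm[\Lom{2}]{\psi}^{-2}(\io \psi^2 u)^2$ from Cauchy--Schwarz, this is an ODI of the form $y' + cy^2 \le C(1+y)$, yielding a uniform-in-time bound on $\io \psi^2 u$ and a finite $\int_0^T \io \psi^2 u^2 \dt$.

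The bootstrap then runs on a nested family $\psi = \psi_0, \psi_1, \dots$ of cutoffs with $\supp \psi_{k+1} \Subset \{\psi_k \equiv 1\} \subset V_0$, via tests $\psi_k^{2p} u^{p-1}$ for increasing $p$. In the resulting identity the only term not immediately of the desired sign or absorbed by a standard cutoff estimate is a multiple of $\io \psi_k^{2p} u^p |\na v|^2$; Young's inequality splits it as $\varepsilon \io \psi_k^{2p} u^{p+1} + C \io \psi_k^{2p} |\na v|^{2(p+1)}$, and the first piece is absorbed into the logistic $\mu_0 \io \psi_k^{2p} u^{p+1}$. The required $L^{2(p+1)}$-bound on $\na v$ over $\supp \psi_k$ is then supplied by interior elliptic regularity for $-\Delta v + v = u$ (or, for $x_0 \in \dOm$, its boundary analogue for the Neumann problem) applied on the larger set $\{\psi_{k-1} \equiv 1\}$, where $u$ is already controlled in some $L^q$ from the previous step. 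After finitely many iterations one reaches $u \in L^\infty((0,T); L^p_{\mathrm{loc}})$ for some $p > 2$; the two-dimensional embedding $W^{2,p} \hookrightarrow C^{1,\alpha}$ then gives $\na v \in L^\infty((0,T); L^\infty_{\mathrm{loc}})$, and parabolic Moser iteration (or a Neumann heat semigroup argument based on the rewriting $u_t = \Delta u - \na v \cdot \na u + (1-\mu)u^2 + (\kappa - v)u$, with its now-bounded drift) finishes the proof on some $U \Subset V_0$.

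The main obstacle is the initial stage: with only global $\Lom{1}$-control of $u$ and hence $\na v$ only in $\Lom{2}$, straightforward Young-type estimates fail to close any test with a power $p > 1$, and the Cauchy--Schwarz-plus-logistic device in the second paragraph is precisely what converts local positivity of $\mu$ into the dissipation needed to launch the bootstrap. The remaining work is largely organizational: arranging the cutoff hierarchy so that $\na v$ is never integrated outside a region of prior $u$-control, and invoking boundary elliptic regularity for the Neumann problem when $x_0 \in \dOm$.
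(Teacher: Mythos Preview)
Your argument has a genuine gap at the very first localized step. From the global $L^1$ bound on $u$ in a two–dimensional domain, elliptic regularity for $-\Delta v + v = u$ with Neumann data yields only $v(\cdot,t)\in W^{1,p}(\Omega)$ for $p<2$ (this is the Brezis--Strauss borderline), not $\nabla v(\cdot,t)\in L^2(\Omega)$. Hence the term $\tfrac{2}{\mu_0}\io |\nabla\psi|^2|\nabla v|^2$ arising from your Young splitting of $2\io \psi u\,\nabla\psi\cdot\nabla v$ is \emph{not} controlled uniformly on $(0,T)$, and the ODI for $y(t)=\io\psi^2 u$ does not close. Since $\supp\nabla\psi$ lies at the edge of $V_0$, where you have no prior localized control of $u$, there is also no local elliptic estimate available to rescue this term. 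This is not a technicality: the missing $L^2$ gradient bound is exactly the critical obstruction in two dimensions, and a naive Young estimate cannot launch the bootstrap from $L^1$ information alone.

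The paper resolves this by a different mechanism. Instead of testing with $\psi^2$ at level $p=1$, it tests with $\varphi u^{p-1}$ for some $p\in\bigl(1,\frac{1}{(1-\mu_0)_+}\bigr)$ and handles the cross term $\frac1p\io u^p\nabla\varphi\cdot\nabla v$ by one further integration by parts,
\[
\frac1p\io u^p\nabla\varphi\cdot\nabla v \;=\; -\io u^{p-1}v\,\nabla u\cdot\nabla\varphi \;-\; \frac1p\io u^p v\,\Delta\varphi,
\]
which trades the poorly controlled $\nabla v$ for $v$ itself (bounded in every $L^q$, $q<\infty$). The first piece is then absorbed into the diffusion term $(p-1)\io\varphi u^{p-2}|\nabla u|^2$ and the second into $\mu_0\io\varphi u^{p+1}$, \emph{provided} one uses cutoffs with the decay $|\nabla\varphi|\le C_\varphi\varphi^{1-\eta}$ and $|\Delta\varphi|\le C_\varphi\varphi^{1-2\eta}$ for suitably small $\eta>0$, so that the powers of $\varphi$ match after Young's inequality. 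This single $L^p$ step already gives, via elliptic regularity, a local $W^{1,q}$ bound for $v$ with some $q>2$, after which a single semigroup argument (rather than an infinite Moser iteration over shrinking sets) yields the local $L^\infty$ bound. Your proposed bootstrap would, in spirit, work \emph{after} such a starting step; but without the extra integration by parts and the tailored cutoff estimates, the bootstrap has no valid base case.
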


\begin{rem}
  We emphasize that Theorems~\ref{th:blowup_points} and \ref{th:main} do not rely on any symmetry of the domain nor the data,
  keeping this result apart from the above-mentioned works, where the only possible location of blow-up is known to be the origin, as by-product of the methods employed and the radial symmetry assumption thereby required.
\end{rem}

\textbf{Plan of the paper.}
After establishing boundedness properties of low level but global in space in Section~\ref{sec3}, we will continue in Section~\ref{sec2} with the construction of smooth cut-off functions, which satisfy certain appropriate estimates crucial for the localized estimations performed in Sections~\ref{sec4} and \ref{sec5}. The essential steps of our analysis are undertaken in Section~\ref{sec4}. There, the properties of the cut-off functions $\varphi$, which are supported in the positivity set of $\mu$, enable us to derive an ODE for a functional of the form $y(t):=\io \varphi u^p(\cdot,t)$ for certain beneficial values of $p>1$, which thereby provides localized $L^p$-boundedness information for $u$ on finite time-intervals in light of bounds for $\io\varphi u^p$ (Lemma~\ref{lm:u_lp}). In a second step this information can be refined to a bound for $\io|\nabla(\varphi v)|^q$ for some $q>2$ (Lemma~\ref{lm:varphi_v_w1q}). In neighbourhoods $V$ of points where $\mu$ is positive we will then be able to establish the boundedness of $\norm[L^{q}(V)]{\nabla v}$ on finite time intervals (Lemma~\ref{lm:local_bound_1}). As $q$ is larger than the space dimension, this information is sufficient for the semigroup techniques employed in Section~\ref{sec5}, where we prove that $u$ is locally bounded in $\Lom{\infty}$ for all finite time intervals (Lemma~\ref{lm:l_infty_bdd} and Lemma~\ref{lm:local_bound_2}).

\section{Spatially global bounds}\label{sec3}
In the sequel, we fix a smooth, bounded domain $\Omega \subset \R^2$, $\kappa, \mu, u_0 \in \con0$ with $\mu \ge 0$ and $u_0 \ge 0$ and a classical solution $(u, v)$ of \eqref{system} with maximal existence time $\tmax \in (0, \infty]$.

Let us start by gathering some time-global boundedness information for the solution components.

\begin{lem}\label{lm:u_l1}
For every finite $T\le \Tmax$ there is $C_T>0$ such that 
$\norm[\Lom1]{u(\cdot,t)} \le C_T$ for every $t\in[0,T)$. 
\end{lem}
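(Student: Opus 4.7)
The plan is to obtain the bound directly by testing the $u$-equation with the constant function $1$, which is the standard starting move for any $L^1$ estimate of a conservation-type equation with reaction.

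First I would integrate \eqref{system:u} over $\Omega$. Using the Neumann boundary conditions \eqref{system:bc} together with the divergence theorem, both $\io \Delta u$ and $\io \nabla\cdot(u\nabla v)$ vanish, leaving the identity
\begin{align*}
  \ddt \io u(\cdot, t) = \io \kappa(x)\, u(\cdot, t) \dx - \io \mu(x)\, u^2(\cdot, t) \dx
  \qquad \text{for all } t \in (0, \tmax).
\end{align*}

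Since $u_0 \ge 0$, the parabolic comparison principle (applied to \eqref{system:u}) gives $u \ge 0$ throughout $\Ombar \times [0, \tmax)$, so that $\mu \ge 0$ implies that the absorptive term $-\io \mu u^2$ is non-positive and may simply be discarded. The remaining linear term is controlled via $\kappa \in \con0$, namely by $\io \kappa u \le \norm[L^\infty(\Om)]{\kappa} \io u$, which yields the differential inequality
\begin{align*}
  \ddt \io u(\cdot, t) \le \norm[L^\infty(\Om)]{\kappa} \io u(\cdot, t) \dx.
\end{align*}

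A Gronwall-type integration then furnishes $\io u(\cdot, t) \le e^{\norm[L^\infty(\Om)]{\kappa}\, t} \io u_0$ for all $t \in [0, \tmax)$, so that setting $C_T \defs e^{\norm[L^\infty(\Om)]{\kappa}\, T} \io u_0$ gives the claim on any finite sub-interval $[0, T)$ of $[0, \tmax)$. There is no genuine obstacle here; the only point worth noting is that the nonnegativity of $\mu$ makes the quadratic degradation cooperate rather than require any careful treatment, so this $L^1$-bound is cheap — it is the higher integrability and localization, undertaken in later sections, that will demand real work.
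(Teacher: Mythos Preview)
Your proof is correct and follows essentially the same route as the paper: integrate \eqref{system:u} over $\Omega$, discard the nonpositive term $-\io \mu u^2$, and apply Gronwall to the resulting linear differential inequality, arriving at the identical constant $C_T = \ure^{\norm[\Lom\infty]{\kappa} T}\norm[\Lom1]{u_0}$. The only (minor) addition in your version is the explicit justification of $u\ge 0$ via the comparison principle, which the paper leaves implicit.
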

\begin{proof}
In view of \eqref{system:u} and \eqref{system:bc}, integrating by parts and utilizing the nonnegativity of $\mu$ reveals that $y(t):=\io u(\cdot,t)$, $t\in(0,\Tmax)$, satisfies
\begin{align*}
y'(t)=\io \kappa u(\cdot,t)-\io\mu u^2(\cdot,t)\leq\norm[\Lom \infty]{\kappa} y(t)\qquad\text{for all }t\in(0,\Tmax).
\end{align*}
Hence, setting $\overline{y}(t):=\norm[\Lom1]{u_0} \ure^{t\norm[\Lom \infty]{\kappa}}$, $t\in(0,\Tmax)$, we conclude from an ODE comparison argument that $y(t)\le C_T:= \norm[\Lom1]{u_0} \ure^{T\norm[\Lom \infty]{\kappa}}$ for all $t\in[0,T)$.
% \begin{align*}
% \io u(\cdot,t)\leq \norm[\Lom1]{u_0} \ure^{t\norm[\Lom \infty]{\kappa}}\quad\text{for all }t\in[0,\Tmax).
% \end{align*}
% Then, given any finite $T\le \Tmax$ the claim follows upon letting $C_T:= \norm[\Lom1]{u_0} \ure^{T\norm[\Lom \infty]{\kappa}}$.
\end{proof}

\begin{lem}\label{lm:v_lq}
For every $p\in[1,2)$ and finite $T\leq \Tmax$ there is $C_1>0$ such that 
\[
\norm[W^{1,p}(\Omega)]{v(\cdot,t)}<C_1\qquad\text{for all }t\in(0,T).
\]
Moreover, for every $q<\infty$ and finite $T\leq \Tmax$ there is $C_2>0$ such that
\[
\norm[\Lom q]{v(\cdot,t)}< C_2\qquad\text{for all }t\in(0,T).
\]
\end{lem}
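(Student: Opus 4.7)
The plan is to reinterpret \eqref{system:v} as the resolvent equation $(-\Delta + 1)v = u$ with homogeneous Neumann data on $\partial\Omega$, and then exploit the $L^1$-bound on $u$ from Lemma~\ref{lm:u_l1} together with standard elliptic regularity and Sobolev embedding. There is nothing genuinely hard here; the only subtlety is that we can only control $u$ in $L^1$, not in any $L^p$ with $p>1$, so we need the $L^1$-based regularity theory for the Neumann resolvent.

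For the first assertion, I would use the well-known fact that the solution operator of the Neumann problem $-\Delta w + w = f$ in $\Omega$, $\partial_\nu w = 0$ on $\partial\Omega$, maps $L^1(\Omega)$ continuously into $W^{1,p}(\Omega)$ for every $p \in [1, \frac{n}{n-1})$, which in our two-dimensional situation means $p \in [1,2)$. Proofs can be obtained either by representing $v$ through the associated Green's function and estimating it via its standard singularity behaviour, or equivalently by writing $v = \int_0^\infty \ure^{-t} \ure^{t\Delta_N} u \dt$ and combining the semigroup estimate $\|\nabla \ure^{t\Delta_N} u\|_{L^p(\Omega)} \le C t^{-\frac12 - \frac{n}{2}(1-\frac1p)} \|u\|_{L^1(\Omega)}$ with the integrability condition $\frac12 + \frac{n}{2}(1-\frac1p) < 1$, which again gives $p < \frac{n}{n-1} = 2$. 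Combining the resulting estimate $\|v(\cdot,t)\|_{W^{1,p}(\Omega)} \le c_p \|u(\cdot,t)\|_{L^1(\Omega)}$ with Lemma~\ref{lm:u_l1} yields the first claim with $C_1 \defs c_p C_T$.

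For the second assertion, given $q \in [1,\infty)$, I would choose $p \in [1,2)$ close enough to $2$ so that the Sobolev embedding $W^{1,p}(\Omega) \hookrightarrow L^q(\Omega)$ holds; concretely, any $p \ge \frac{2q}{q+2}$ works because then $\frac{np}{n-p} = \frac{2p}{2-p} \ge q$. Applying the first part of the lemma with this choice of $p$ then gives $\|v(\cdot,t)\|_{L^q(\Omega)} \le c \|v(\cdot,t)\|_{W^{1,p}(\Omega)} \le C_2$ on $(0,T)$, as desired.

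The main obstacle, such as it is, lies only in citing (or recalling the proof of) the $L^1 \to W^{1,p}$ estimate for the Neumann resolvent, since this borderline regularity statement is somewhat more delicate than the standard $W^{2,p}$-regularity available for $L^p$-data with $p>1$; once that ingredient is in place the argument is essentially a one-line Sobolev embedding.
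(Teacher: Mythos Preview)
Your proposal is correct and follows essentially the same route as the paper: the authors likewise invoke the $L^1 \to W^{1,p}$ elliptic regularity for the Neumann resolvent (citing Br\'ezis--Strauss) to obtain $\|v\|_{W^{1,p}(\Omega)} \le C_0 \|u\|_{L^1(\Omega)}$, combine this with Lemma~\ref{lm:u_l1}, and then pick $p \in [1,2) \cap [\tfrac{2q}{q+2},2)$ to deduce the $L^q$-bound via Sobolev embedding. Your alternative justifications of the key estimate (Green's function or semigroup representation) are fine substitutes for the Br\'ezis--Strauss citation.
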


\begin{proof}
We fix $p\in[1,2)$ and find that according to a result from elliptic regularity theory (cf.\ \cite[Lemma~23]{BrezisStrauss73})  there is $C_0=C_0(p)>0$ such that
\begin{align*}
\norm[W^{1,p}(\Omega)]{v}\leq C_0\norm[\Lom 1]{(\Delta-1)v}=C_0\norm[\Lom 1]{u}\qquad\text{in }(0,\Tmax),
\end{align*}
which in light of Lemma~\ref{lm:u_l1} shows that for any finite $T\leq \Tmax$ there is $C_1=C_1(p,T)>0$ such that
\begin{align*}
\norm[W^{1,p}(\Omega)]{v(\cdot,t)}\leq C_1\qquad\text{for all }t\in(0,T).
\end{align*}
The second assertion then follows from the first and the Sobolev embedding theorem in light of the fact that for any $q<\infty$ we can pick $p\in[1,2)\cap[\f{2q}{q+2},2)$.
\end{proof}

\section{Suitable cut-off functions}\label{sec2}
In order to derive localized bounds which go beyond those implied by the global estimates obtained in the previous section, we will make use of certain (spatial) cut-off functions.
The general idea is to fix $x_0 \in \Ombar$ with $\mu(x_0) > 0$ and then choose a cut-off function on whose support $\mu$ is still positive. 
Apart from usual properties of cut-off functions, we inter alia also need vanishing normal derivatives. Since these functions are of crucial importance for the proof, we go into some detail and now construct cut-off functions suitable for our purpose.

Moreover, we choose to formulate the statements in this section for general (bounded and two-dimensional) $C^l$-domains $G$, not only for the already fixed smooth domain $\Omega$. However, in later sections, we will always apply these results to $G = \Omega$.
\begin{lem}\label{domaintrafo}
Let $\domain\subset ℝ^2$ be a $C^l$-domain, $l\ge1$, and $(x_0,y_0)\in \domainbar$. Then there are open sets $B,W\subset ℝ^2$ with $(x_0,y_0)\in W$ and a $C^l$-diffeomorphism $Φ\colon W\to B$ such that 
\begin{subequations}\label{diffeo-1}
\begin{align}
 Φ^{-1}(B\cap (ℝ\times(0,∞))) &= W\cap \domain, \label{diffeo-1:inner}\\
 Φ^{-1}(B\cap (ℝ\times\set{0})) &= W\cap \partial\domain, \label{diffeo-1:bdr}\\
 Φ^{-1}(B\cap (ℝ\times(-∞,0))) &= W\setminus \domainbar 
\end{align}
\end{subequations}
and 
\begin{equation}\label{diffeo-normal}
 ∂_{ν} Φ_1 (x,y)=0,\qquad ∂_{ν} Φ_2(x,y)<0 \qquad \text{for every } (x,y)\in∂\domain\cap W.
\end{equation}
\end{lem}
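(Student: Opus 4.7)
I would distinguish two cases. If $(x_0,y_0) \in \domain$, the claim is essentially trivial: pick an open ball $W \subset \domain$ of radius less than $1$ around $(x_0,y_0)$ and set $\Phi(x,y) \defs (x-x_0,\, y-y_0+1)$ and $B \defs \Phi(W)$. Then $B \subset \R\times(0,\infty)$, the three identities in \eqref{diffeo-1} collapse to the trivial $W = W\cap\domain$, and \eqref{diffeo-normal} holds vacuously because $\partial\domain \cap W = \emptyset$.

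For the interesting case $(x_0,y_0) \in \partial\domain$, the plan is to use normal coordinates along the boundary. I would fix an arc-length parametrization $\gamma \in C^l((-\delta_0,\delta_0); \R^2)$ of $\partial\domain$ with $\gamma(0)=(x_0,y_0)$, write $\tau(s)\defs\gamma'(s)$ for the unit tangent, and let $\nu(s)$ be the outward unit normal to $\domain$ at $\gamma(s)$, obtained by rotating $\tau(s)$ by $-\pi/2$ (the orientation of $\gamma$ being chosen so that this produces the outer, not inner, normal). I then set
\[
  \Psi(s,t) \defs \gamma(s) - t\,\nu(s), \qquad (s,t) \in (-\delta,\delta) \times (-\varepsilon,\varepsilon),
\]
for suitably small $\delta,\varepsilon>0$. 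At $(s,0)$ the Jacobian $D\Psi(s,0)=[\tau(s) \mid -\nu(s)]$ is an orthogonal matrix, so the inverse function theorem combined with a compactness/shrinking argument in $\delta,\varepsilon$ shows that $\Psi$ is a diffeomorphism from $B \defs (-\delta,\delta)\times(-\varepsilon,\varepsilon)$ onto an open set $W \ni (x_0,y_0)$. Finally, I let $\Phi \defs \Psi^{-1}$.

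The set identities \eqref{diffeo-1} then follow from the geometric content of $\Psi$: the image of $\{t=0\}$ is $\partial\domain\cap W$, while for small $t>0$ the point $\gamma(s)-t\nu(s)$ moves a small distance from the boundary in the \emph{inward} direction $-\nu(s)$ and thus lies in $\domain$, and symmetrically lies outside $\domainbar$ for small $t<0$. This last step I would justify either via a local graph representation of $\partial\domain$ together with a Taylor expansion, or via the signed-distance function in a tubular neighbourhood (and by then shrinking $W$ so that the map into the tubular neighbourhood really is surjective onto $W\cap\domain$). The derivative conditions \eqref{diffeo-normal} drop out of linear algebra: since the inverse of an orthogonal matrix is its transpose,
\[
  D\Phi(\gamma(s)) = D\Psi(s,0)^{-1} = D\Psi(s,0)^{\top},
\]
so the rows of $D\Phi(\gamma(s))$ are $\tau(s)^\top$ and $-\nu(s)^\top$. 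Hence $\nabla\Phi_1(\gamma(s))=\tau(s)$ and $\nabla\Phi_2(\gamma(s))=-\nu(s)$, which yields $\partial_\nu\Phi_1(\gamma(s)) = \tau(s)\cdot\nu(s) = 0$ and $\partial_\nu\Phi_2(\gamma(s)) = -|\nu(s)|^2 = -1 < 0$, as required. The only point I expect to require extra care is a regularity bookkeeping issue: because $\nu$ is built from $\gamma'$, a naive reading of the above construction gives only $\Psi \in C^{l-1}$, which is harmless for the paper's application (where $\domain=\Om$ is smooth) but for borderline $l=1$ would demand a slightly more delicate argument.
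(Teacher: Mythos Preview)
Your argument is correct for $l\ge 2$ and, as you yourself note, only fails to reach the stated regularity in the borderline case $l=1$. The route, however, is genuinely different from the paper's. You straighten the boundary via tubular (normal) coordinates $\Psi(s,t)=\gamma(s)-t\,\nu(s)$ and read off \eqref{diffeo-normal} from the orthogonality of $D\Psi(s,0)$. The paper instead works with a graph representation $y>f(x)$ (after rotating so that $f'(x_0)>0$) and writes down an explicit map
\[
  \Phi(x,y)=\bigl(v(x)\ure^{y}-\ure^{y_0},\; y-f(x)\bigr),
\]
where $v$ solves $v'=v/f'$, $v(x_0)=1$; the ODE is engineered precisely so that the first row of $D\Phi$ is parallel to the tangent $(1,f'(x))$ along $\partial\domain$, which yields $\partial_\nu\Phi_1=0$ by direct computation.

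What each approach buys: your construction is conceptually cleaner and makes \eqref{diffeo-normal} almost automatic, but the factor $\nu(s)$ costs one derivative, so you obtain only a $C^{l-1}$-diffeomorphism. The paper's explicit formula, while less geometric, produces $\Phi\in C^l$ for every $l\ge 1$ because $v=\exp\!\bigl(\int 1/f'\bigr)$ inherits the full $C^l$ regularity of $f$. Since the only downstream use of this lemma in the paper is through Lemma~\ref{lm:cutoff}, which already assumes $\domain$ is $C^2$, your loss of one derivative is indeed immaterial for the application; but if one wants the lemma exactly as stated, the paper's construction (or some other device avoiding $\nu$ as a building block) is needed.
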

\begin{proof}
 If  $(x_0,y_0)\in \domain$, choosing $W$ sufficiently small ensures that \eqref{diffeo-normal} is an empty condition and of \eqref{diffeo-1} only the first part \eqref{diffeo-1:inner} is relevant and can easily be obtained by choosing a suitable translation as $Φ$. We treat the case $(x_0,y_0)\in ∂\domain$ in more detail:  After rotation if necessary, we may assume that on an open neighbourhood $W_1$ of $(x_0,y_0)$ we have $(x,y)\in \domain$ iff $y>f(x)$ for some $C^l$-function $f$ satisfying $f(x_0)=y_0$ and $f'(x_0)>0$.
 We let $v\colon(x_0-δ,x_0+δ)\to ℝ$ be the solution of 
 \[
  v'(x) = \f1{f'(x)} v(x),\quad x\in (x_0-δ,x_0+δ),\qquad v(x_0)=1,
 \]
 where $δ>0$ is chosen sufficiently small that this problem has a unique solution and, moreover, $f'(x)>0$ for all $x\in (x_0-δ,x_0+δ)$. For $(x,y)\in W_2:=W_1\cap ((x_0-δ,x_0+δ)\timesℝ)$ we then define 
 \[
  Φ(x,y)=\matr{v(x)\ure^y-\ure^{y_0}\\y-f(x)}
 \]
 and note that due to 
 \begin{equation}\label{trafo-derivative}
  DΦ(x,y)=\matr{v'(x)\ure^y&v(x)\ure^y\\-f'(x)&1}=\matr{\f{v(x)}{f'(x)}\ure^y&v(x)\ure^y\\-f'(x)&1}
 \end{equation}
we have $\det DΦ(x_0,y_0)=\f{1}{f'(x_0)}\ure^{y_0}+f'(x_0)\ure^{y_0}>0$ and hence there is an open neighbourhood $W\subset W_2$ of $(x_0,y_0)$ such that the restriction $Φ\colon W\to Φ(W)=:B$ is a $C^l$-diffeomorphism.

We observe that for $(x,y)\in W$ we have 
\[
(x,y)\in \domain\iff y>f(x)\iff Φ_2(x,y)>0\iff Φ(x,y)\in B\cap (ℝ\times (0,∞))
\]
and similarly for the other parts of \eqref{diffeo-1}. Moreover, on account of \eqref{trafo-derivative}, and the unit normal at any $(x,y)\in W_1\cap ∂\domain$ being given by 
\[
 ν(x,y) = \f1{\sqrt{1+(f'(x))^2}} \matr{f'(x)\\-1},
\]
we have 
\begin{align*}
 ∂_{ν} Φ(x,y) &= DΦ(x,y) ν(x,y) \\ &= \f1{\sqrt{1+(f'(x))^2}} \matr{\f{v(x)}{f'(x)}\ure^y&v(x)\ure^y\\-f'(x)&1}\matr{f'(x)\\-1} = \matr{0\\-\sqrt{1+(f'(x))^2}}
\end{align*}
for all $(x, y) \in W$, which implies \eqref{diffeo-normal}.
\end{proof}

Utilizing the domain transformation provided by the lemma above, we can now construct suitable spatial cut-offs by the following two-step procedure.

\begin{lem}\label{lm:nicefunction}
 Let $\domain\subset ℝ^2$ be a bounded $C^l$-domain, $l\ge 1$, and $x_0\in \domainbar$. Then there is a set $W\subset \domainbar$ with $x_0\in W$ and $W$ relatively open in $\domainbar$ such that whenever $K\subset W$ is compact and $V$ is an open neighbourhood of $K$ in $W$ with $\bar V\subset W$, there is a function $φ\in C^l(\domainbar)$ such that 
 \[
  0\le φ\le 1 \text{ in } \domainbar, \quad φ=0 \text{ in } \domainbar\setminus V, \quad φ=1 \text{ in } K \quad \text{and} \quad ∂_{ν} φ=0 \text{ on } ∂\domain.
 \]
\end{lem}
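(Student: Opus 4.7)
The plan is to reduce, via the boundary-flattening diffeomorphism from Lemma~\ref{domaintrafo}, the construction to one in the closed upper half-plane, where a transformed cut-off can be made even in the transversal coordinate. The chain rule will then make $\delny \varphi$ on $\dOm$ a sum of two terms that both vanish by design: $\delny \Phi_1$ vanishes by \eqref{diffeo-normal}, and the $y_2$-derivative of an even function of $y_2$ vanishes on $\{y_2 = 0\}$.

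Concretely, I would set $W \defs W' \cap \domainbar$, with $W'$ the neighbourhood produced by Lemma~\ref{domaintrafo}. The interior case $x_0 \in \domain$ is immediate: take $W$ to be a small open ball whose closure lies in $\domain$, so the boundary condition is vacuous and a standard smooth bump suffices. In the boundary case $x_0 \in \dOm$, given compact $K \subset W$ and open $V \subset W$ with $K \subset V$ and $\bar V \subset W$, I would pass to the images $K' \defs \Phi(K)$ and $V' \defs \Phi(V)$ in $B \cap (\R \times [0,\infty))$, noting that $\bar{V'} = \Phi(\bar V)$ is compact in $B$.

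Next, I would reflect both sets across $\{y_2 = 0\}$: with $\sigma(y_1, y_2) \defs (y_1, -y_2)$, set $\tilde K \defs K' \cup \sigma(K')$ and $\tilde V \defs V' \cup \sigma(V')$, yielding a compact set $\tilde K \subset \R^2$ and a bounded open set $\tilde V \supset \tilde K$, both symmetric under $\sigma$. A standard smooth Urysohn construction (for instance, mollifying the indicator of a suitable intermediate open set) yields $\tilde \psi \in C_c^\infty(\tilde V)$ with $0 \le \tilde \psi \le 1$ and $\tilde \psi \equiv 1$ on $\tilde K$. Symmetrizing via $\tilde \varphi(y_1, y_2) \defs \f12\bigl(\tilde \psi(y_1, y_2) + \tilde \psi(y_1, -y_2)\bigr)$ retains these properties while additionally enforcing $\tilde \varphi \circ \sigma = \tilde \varphi$, so that $\partial_{y_2} \tilde \varphi(y_1, 0) = 0$ for every admissible $y_1$.

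Finally, I would define $\varphi(x) \defs \tilde \varphi(\Phi(x))$ for $x \in W'$ and $\varphi(x) \defs 0$ elsewhere in $\domainbar$. The compact support of $\tilde \varphi$ in $\tilde V$ ensures that $\varphi$ has support inside $\bar V \subset W$, so $\varphi \in C^l(\domainbar)$, and $0 \le \varphi \le 1$, $\varphi|_K \equiv 1$, $\varphi|_{\domainbar \setminus V} \equiv 0$ all follow directly. On $\dOm \cap W'$ the chain rule yields
\[
  \delny \varphi(x) = \partial_{y_1} \tilde \varphi(\Phi(x)) \, \delny \Phi_1(x) + \partial_{y_2} \tilde \varphi(\Phi(x)) \, \delny \Phi_2(x) = 0,
\]
by \eqref{diffeo-normal} together with the evenness of $\tilde \varphi$ in $y_2$. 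The main obstacle to keep in mind is precisely this vanishing normal derivative: neither the special diffeomorphism alone (because a generic cut-off has nonzero $y_2$-derivative at the boundary) nor an even cut-off alone (because a generic diffeomorphism would mix in a nonvanishing $\delny \Phi_1$) suffices, but their combination makes both summands in the chain rule collapse.
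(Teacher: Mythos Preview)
Your proof is correct and follows essentially the same route as the paper: reduce via the boundary-flattening diffeomorphism of Lemma~\ref{domaintrafo}, reflect $\Phi(K)$ and $\Phi(V)$ across $\{y_2=0\}$, build a cut-off that is even in $y_2$, and pull back so that the chain rule kills $\partial_\nu\varphi$ via \eqref{diffeo-normal} and the evenness. The only cosmetic difference is that you obtain the even cut-off by averaging a generic Urysohn function with its reflection, whereas the paper gets it directly by mollifying the indicator of the (already symmetric) reflected set with a radial kernel.
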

\begin{proof}
 We choose open sets $W, B$ and a $C^l$-diffeomorphism $Φ$ as in Lemma~\ref{domaintrafo}. Given $K$ and $V$ as in the statement of this lemma, we let 
 \begin{align*}
  V'&= \set{\,x\in ℝ^2\mid (x_1, x_2)\in Φ(V) \text{ or } (x_1, -x_2) \in \Phi(V)\,}
 \intertext{and}
  K'&= \set{\,x\in ℝ^2\mid (x_1, x_2)\in Φ(K) \text{ or } (x_1, -x_2) \in \Phi(K)\,}.
 \end{align*}
Then $K'\subset V'$ and $K'$ is compact.
Moreover, $V'$ is open in $ℝ^2$ since for each $z \in V'$ there is an open neighbourhood of $z$ contained in $V'$, which can be shown by considering each of the cases $z_2 > 0$, $z_2 < 0$ and $z_2 = 0$ separately.
%Let $(z_1,z_2)\in V'$. Show: There is an open neighbourhood of $z$ which is subset of $V'$. Case i): $z_2>0$: Then $Φ^{-1}(z)\in V\cap \domain$ and there is an open set $U\subset V\cap \domain$ with $Φ^{-1}(z)\in U$; $Φ(U)\subset V'$ and $Φ(U)$ is an open neighbourhood of $z$. Case ii): $z_2<0$: Find a neighbourhood $W$ of $(z_1,-z_2)$ as in Case i) and let $W'=\set{(a_1,-a_2)\mid a\in W}$. Case iii): $z_2=0$. Then $z'=Φ^{-1}(z)\in V\cap ∂\domain$. Let $\tilde V\subset ℝ^2$ be open such that $\tilde V\cap \domainbar=V$. There is an open ball $B_{δ}(z')\subset \tilde V$ and $Φ(B_{δ}(z'))$ contains an open ball $B_{ε}(z)$. Claim: $B_{ε}(z)\subset V'$. Let $a\in B_{ε}(z)$. If $a_2\ge 0$, then $Φ^{-1}(a)\in Φ(B_{ε}(z))\cap \domainbar \subset B_{δ}(z')\cap \domainbar\subset \tilde V\cap \domainbar=V$, therefore $a\in V'$.
%}

If $K'=\emptyset$, we let $\tilde{φ}=0$, otherwise we introduce \(δ=\dist(K',ℝ^2\setminus V')>0\), let $χ$ be the characteristic function of $\set{y\inℝ^2\mid \dist(y,K')<\f{δ}3}$, let $ξ\in C^\infty([0,∞))$ be a decreasing function with $ξ(s)=1$ if $s<\f{δ}3$, $ξ(s)=0$ if $s>\f{2δ}3$ and set 
\[
 \tilde{φ}(x) \defs c_0\int_{ℝ^2} χ(y) ξ(|x-y|) \dy \quad \text{for $x \in \R^2$, where} \quad c_0 \defs \left(\int_{ℝ^2} ξ(|y|) \dy \right)^{-1}.
\]
Then $\tilde{φ}(x)=0$ if $x\in ℝ^2\setminus V'$, $\tilde{φ}(x)=1$ if $x\in K'$, $\tilde{φ}(x)\in [0,1]$ for every $x\in ℝ^2$ and $\tilde{φ}\in C^{∞}(ℝ^2)$. Moreover, for any $z=(z_1,z_2)\in ℝ^2$ denoting $(z_1,-z_2)$ by $\hat z$, we have that
\begin{align*}
 \tilde{φ}(\hat x)&=c_0 \int_{ℝ^2} χ(y) ξ(|\hat x-y|)\dy = c_0 \int_{ℝ^2} χ(y) ξ(|x-\hat y|)\dy \\
 &= c_0\int_{ℝ^2} χ(\hat y)ξ(|x-\hat y|)\dy=c_0\int_{ℝ^2}χ(y)ξ(|x-y|)\dy=\tilde{φ}(x)
\end{align*}
for all $x \in \R^2$,
where the third equality relies on the symmetry of $K'$. Therefore, $\tilde{φ}_{x_2}(x_1, x_2)=0$, whenever $x\in W$ and $x_2=0$. Finally, we set
\[
 φ(x)=\tilde{φ}(Φ(x)),\qquad x\in W, 
\]
and observe that if $x\in ∂\domain$, then by the choice of $Φ$, \eqref{diffeo-1:bdr} and \eqref{diffeo-normal},
\[
   ∂_{ν} φ(x)
 = \tilde{φ}_{x_1} (Φ(x_1, x_2)) \underbrace{∂_{ν} Φ_1(x_1, x_2)}_{=0} + \underbrace{\tilde{φ}_{x_2} (Φ(x_1, x_2))}_{=0} ∂_{ν} Φ_2(x_1, x_2) = 0.\qedhere
\]
\end{proof}

\begin{lem}\label{lm:cutoff}
Let $\domain \subset \R^2$ be a bounded $C^2$-domain and $x_0\in \domainbar$. Then there is a relatively open set $W\subset \domainbar$ containing $x_0$ such that whenever $K \subset W$ is compact, $V$ is an open neighbourhood of $K$ in $W$ and $\eta \in (0, \frac12)$, 
  then there exist $\varphi \in C^2(\domainbar)$ and $C_\varphi \gt 0$ such that
  \begin{align}\label{eq:cutoff:varphi}
    0\le φ\le 1 \text{ in } \domainbar, \quad φ=0 \text{ in } \domainbar\setminus V, \quad φ=1 \text{ in } K \quad \text{and} \quad ∂_{ν} φ=0 \text{ on } ∂\domain
  \end{align}
  and
  \begin{align}\label{eq:cutoff:grad_est}
    |\nabla \varphi| \le C_\varphi \varphi^{1-\eta}
    \quad \text{and} \quad
    |\Delta \varphi| \le C_\varphi \varphi^{1-2\eta}
    \qquad \text{in $\domainbar$}.
  \end{align}
\end{lem}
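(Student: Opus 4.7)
The plan is to combine Lemma~\ref{lm:nicefunction} (applied with $l=2$) with a power-trick: start from the basic cut-off $\psi$ it provides and set $\varphi \defs \psi^\alpha$ for a suitably large exponent $\alpha$, so that the resulting function vanishes to sufficiently high order at $\partial\{\varphi>0\}$ to enjoy the degenerate estimates in \eqref{eq:cutoff:grad_est}. The relatively open set $W$ in the conclusion will simply be the $W$ produced by Lemma~\ref{lm:nicefunction}.

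First, given $K$, $V$ and $\eta \in (0, \tfrac12)$, I obtain from Lemma~\ref{lm:nicefunction} a function $\psi \in C^2(\domainbar)$ with $0 \le \psi \le 1$ in $\domainbar$, $\psi = 1$ on $K$, $\psi = 0$ on $\domainbar \setminus V$, and $\partial_\nu \psi = 0$ on $\partial \domain$. I then fix $\alpha \gt \max\{2, 1/\eta\}$; since $\eta \lt \tfrac12$, this is possible (e.g.\ $\alpha = 1 + 1/\eta$), and I set $\varphi \defs \psi^\alpha$.

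Second, I verify that $\varphi \in C^2(\domainbar)$. On $\{\psi \gt 0\}$ this is immediate from the chain rule. At a point $x_\star$ with $\psi(x_\star) = 0$, $x_\star$ is a minimum of the nonnegative $C^2$ function $\psi$, so $\nabla \psi(x_\star) = 0$ and, by Taylor expansion, $\psi(y) = O(|y-x_\star|^2)$ as $y \to x_\star$. Since $\alpha \gt 2$, the formal expressions
\begin{align*}
\nabla \varphi = \alpha \psi^{\alpha-1} \nabla \psi, \qquad
\Delta \varphi = \alpha(\alpha-1)\psi^{\alpha-2}|\nabla\psi|^2 + \alpha\psi^{\alpha-1}\Delta\psi,
\end{align*}
extended by $0$ at zeros of $\psi$, indeed yield continuous first and second derivatives, so $\varphi \in C^2(\domainbar)$. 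The four properties in \eqref{eq:cutoff:varphi} are then inherited from $\psi$: in particular, $\partial_\nu \varphi = \alpha \psi^{\alpha-1} \partial_\nu \psi = 0$ on $\partial \domain$.

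Third, I turn to the quantitative estimates \eqref{eq:cutoff:grad_est}. Setting $M \defs \max\{\|\nabla \psi\|_{L^\infty(\domain)}^2, \|\Delta \psi\|_{L^\infty(\domain)}\}$ (finite since $\psi \in C^2(\domainbar)$ and $\domain$ is bounded), and using $0 \le \psi \le 1$ to convert $\psi^{\alpha-1} \le \psi^{\alpha-2}$, one gets $|\nabla \varphi| \le \alpha\sqrt{M}\,\psi^{\alpha-1} = \alpha\sqrt{M}\,\varphi^{1-1/\alpha}$ and $|\Delta \varphi| \le \big(\alpha(\alpha-1)+\alpha\big)M\,\psi^{\alpha-2} = C\,\varphi^{1-2/\alpha}$. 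Since $\alpha \ge 1/\eta$ gives $1-1/\alpha \ge 1-\eta$ and $1-2/\alpha \ge 1-2\eta$, and since $\varphi \in [0,1]$ (so $\varphi^{s} \le \varphi^{t}$ whenever $s \ge t$), both inequalities in \eqref{eq:cutoff:grad_est} follow with a suitable $C_\varphi \gt 0$.

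The only delicate point is the $C^2$ regularity at the zero set of $\psi$; this is exactly why the power $\alpha$ must be chosen strictly larger than $2$. The constraint $\eta \lt \tfrac12$ in the hypothesis is what makes $\alpha \ge 1/\eta \gt 2$ both necessary for the estimates and compatible with $C^2$ smoothness, so a single choice of $\alpha$ handles regularity and the two exponential decay rates simultaneously.
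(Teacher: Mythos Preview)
Your proof is correct and follows essentially the same approach as the paper: take the basic cut-off $\psi$ from Lemma~\ref{lm:nicefunction} and set $\varphi=\psi^{\alpha}$ for a large power (the paper chooses $\alpha=1/\eta$, which already exceeds $2$ since $\eta<\tfrac12$, so your $\max\{2,1/\eta\}$ is in fact just $1/\eta$). Your explicit verification of $C^2$ regularity at the zero set of $\psi$ is a detail the paper leaves implicit; note that at boundary zeros the conclusion $\nabla\psi(x_\star)=0$ also uses $\partial_\nu\psi=0$, and alternatively one can argue directly that $t\mapsto t^{\alpha}$ is $C^2$ on $[0,\infty)$ for $\alpha>2$ and invoke the chain rule.
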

\begin{proof}
  With $W$ as in Lemma~\ref{lm:nicefunction}, we let $\tilde \varphi \in C^{2}(\domainbar)$ be a function
  with $\partial_\nu \tilde \varphi = 0$ on $\partial \domain$,
  $0 \le \tilde \varphi \le 1$,
  $\tilde \varphi = 0$ in $\domainbar \setminus V$ and
  $\tilde \varphi = 1$ in $K$ as provided by Lemma~\ref{lm:nicefunction}. Since
  \begin{align*}
        |\nabla \tilde \varphi^\frac1\eta| 
    =   \frac1\eta \tilde \varphi^{\frac1\eta-1} |\nabla \tilde \varphi|
    \le \frac{\|\tilde \varphi\|_{C^1(\domainbar)}}{\eta} \left(\tilde \varphi^\frac1\eta\right)^{1-\eta}
  \end{align*}
  and
  \begin{align*}
        |\Delta \tilde \varphi^\frac1\eta| 
    =   \frac1\eta \nabla \cdot (\varphi^{\frac1\eta-1} \nabla \varphi)
    =   \frac{1-\eta}{\eta^2} \varphi^{\frac1\eta-2} |\nabla \varphi|^2
        + \frac{\eta}{\eta^2} \varphi^{\frac1\eta-1} |\Delta \varphi|
    \le \frac{1}{\eta^2} \|\tilde \varphi\|_{C^2(\domainbar)}^2  \left(\tilde \varphi^\frac1\eta\right)^{1-2\eta}
  \end{align*}
  in $\domainbar$,
  $\varphi \defs \tilde \varphi^\frac1\eta$ additionally fulfils \eqref{eq:cutoff:grad_est} for a certain $C_\varphi \gt 0$.
\end{proof}

\begin{rem}\label{rm:eta_optimal}
  Positivity of $\eta$ in Lemma~\ref{lm:cutoff} is a necessary condition in the sense that there are no nontrivial $0 \le \varphi \in C_c^{2}(\domain)$ fulfilling \eqref{eq:cutoff:grad_est} for $\eta = 0$.
  Indeed, if $\varphi(x_0) = 0$ for some $x_0 \in \domainbar$ and $|\nabla \varphi \cdot \xi| \le C_\varphi \varphi$ in $\domainbar$ for $\xi \in \partial B_1(0)$ and some $C_\varphi > 0$,
  then $\varphi(x_0 + \xi s) = 0$ for all $s \in \R$ with $x_0 + \xi s \in \domainbar$ by the comparison principle for ODEs.
\end{rem}

\section{\texorpdfstring{A localized $W^{1, q}$ bound for some $q > 1$ for $v$}{A localized W1q bound for some q>1 for v}}\label{sec4}
We now aim to obtain localized bounds which are not already implied by the spatially global bounds obtained in Section~\ref{sec3}.
To that end, a natural idea is to follow the reasoning in \cite{TelloWinklerChemotaxisSystemLogistic2007},
where global existence of classical solutions of \eqref{system} has been shown for constant $\mu > 0$,
and to replace $u$ and $v$ by $\varphi u$ and $\varphi v$ for certain cut-off functions $\varphi$ on whose support $\mu$ is uniformly positive, whenever appropriate.
In addition to \cite{TelloWinklerChemotaxisSystemLogistic2007}, however, we then need to also account for terms involving derivatives of $\varphi$.
As it turns out, this would be entirely unproblematic \emph{if} we could choose cut-off functions as in Lemma~\ref{lm:cutoff} which fulfil \eqref{eq:cutoff:grad_est} for $\eta = 0$.
Unfortunately, as already observed in Remark~\ref{rm:eta_optimal}, this is impossible.
However, as we may at least choose $\eta > 0$ arbitrarily close to $0$ in Lemma~\ref{lm:cutoff},
one might still hope that these terms can be favourably estimated, although perhaps in a more delicate way.

This is indeed the case.
Thus, as a first step towards deriving localized $L^\infty$ bounds,
we now bound $\io \varphi u^p$ locally uniformly in time for certain $p>1$.
\begin{lem}\label{lm:u_lp}
  Let $\mu_0 \gt 0$, $p \in (1, \frac{1}{(1-\mu_0)_+})$ and $T \in (0, \tmax] \cap (0, \infty)$.
  Suppose $\varphi \in C^{2}(\Ombar)\setminus\set{0}$ with $\partial_\nu \varphi = 0$ on $\partial \Omega$,
  $\supp \varphi \subset \mu^{-1}((\mu_0, \infty))$
  and $0 \le \varphi \le 1$ in $\Ombar$
  fulfils \eqref{eq:cutoff:grad_est} for some $C_\varphi \gt 0$, $\eta \defs \frac1{2(p+1)}$ and $G \defs \Omega$.
  Then there exists $C\gt 0$ such that
  \begin{align}\label{eq:u_lp:statement}
    \io \varphi u^p(\cdot, t) \le C
    \qquad \text{for all $t \in (0, T)$}.
  \end{align}
\end{lem}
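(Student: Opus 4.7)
The plan is to derive a superlinear ordinary differential inequality for $y(t) \defs \io \varphi u^p(\cdot, t)$ and conclude via ODE comparison. First I would test \eqref{system:u} against $\varphi u^{p-1}$ and integrate by parts, exploiting $\partial_\nu u = \partial_\nu v = \partial_\nu\varphi = 0$ to discard all boundary contributions, and invoking the elliptic identity $\Delta v = v - u$ from \eqref{system:v} after a second integration by parts in the taxis term. This yields
\begin{align*}
\tfrac1p y' + \tfrac{4(p-1)}{p^2}\io\varphi|\na u^{p/2}|^2 + \io\mu\varphi u^{p+1} &= \tfrac1p\io u^p\Delta\varphi + \tfrac1p\io u^p\na\varphi\cdot\na v \\
&\quad {}- \tfrac{p-1}{p}\io\varphi u^p v + \tfrac{p-1}{p}\io\varphi u^{p+1} + \io\kappa\varphi u^p.
\end{align*}
Because $\mu \ge \mu_0$ on $\supp\varphi$, the hypothesis $p \lt \tfrac{1}{(1-\mu_0)_+}$ is equivalent to $c_0 \defs p\mu_0 - (p-1) > 0$; after dropping the nonpositive $v$-term this leads to
\begin{align*}
y' + \tfrac{4(p-1)}{p}\io\varphi|\na u^{p/2}|^2 + c_0 \io\varphi u^{p+1} \le \io u^p|\Delta\varphi| + \io u^p|\na\varphi||\na v| + p\norm[\Lom\infty]{\kappa}\, y.
\end{align*}

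The choice $\eta = \tfrac{1}{2(p+1)}$ becomes crucial here: since $1-2\eta = \tfrac{p}{p+1}$, the bound $|\Delta\varphi|\le C_\varphi\varphi^{p/(p+1)}$ turns $u^p|\Delta\varphi|$ into $C_\varphi(u^{p+1}\varphi)^{p/(p+1)}$, which Young's inequality absorbs into $\tfrac{c_0}{4}\io\varphi u^{p+1}$ plus a constant. The main obstacle is the chemotaxis cross-term $\io u^p|\na\varphi||\na v|$: Lemma~\ref{lm:v_lq} only controls $\na v$ in $\Lom{q}$ for $q \lt 2$, so naive Hölder estimates demand too much of $u$. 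I would circumvent this by a further integration by parts (valid because $\partial_\nu\varphi = 0$) shifting the derivative off $v$:
\begin{align*}
\io u^p\na\varphi\cdot\na v = -\io v\,\na u^p\cdot\na\varphi - \io vu^p\Delta\varphi.
\end{align*}
Writing $\na u^p = 2u^{p/2}\na u^{p/2}$ and applying Young with the pointwise estimate $|\na\varphi|^2/\varphi \le C_\varphi^2\varphi^{p/(p+1)}$ bounds the first summand by $\varepsilon\io\varphi|\na u^{p/2}|^2 + C(\varepsilon)\io v^2(u^{p+1}\varphi)^{p/(p+1)}$. The gradient contribution is absorbed into the diffusion-induced dissipation on the left, while Hölder with exponents $p+1$ and $\tfrac{p+1}{p}$, together with the $\Lom{2(p+1)}$-bound on $v$ from Lemma~\ref{lm:v_lq}, reduces the remaining integral to $C(\io\varphi u^{p+1})^{p/(p+1)}$, in turn absorbable into $\io\varphi u^{p+1}$. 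The summand involving $\Delta\varphi$ is treated identically, requiring only that $\norm[\Lom{p+1}]{v}$ be bounded.

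Combining everything and choosing all Young parameters small enough yields $y' + \tfrac{c_0}{2}\io\varphi u^{p+1} \le C(1 + y)$ on $(0,T)$. Hölder's inequality supplies $y = \io\varphi u^p \le \kl{\io\varphi}^{1/(p+1)}\kl{\io\varphi u^{p+1}}^{p/(p+1)}$, so that
\begin{align*}
y'(t) + C' y(t)^{1+1/p} \le C(1+y(t)) \qquad\text{for all } t\in(0,T).
\end{align*}
Since $1 + \tfrac1p \gt 1$, the superlinear dissipation dominates the right-hand side once $y$ is large enough, and a standard ODE comparison argument shows that $y(t)$ remains bounded on $[0,T)$ by a constant depending only on $y(0) \le \norm[\Lom\infty]{u_0}^p |\Omega|$, $T$, and the remaining data. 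This delivers \eqref{eq:u_lp:statement}.
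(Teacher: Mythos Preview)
Your proposal is correct and follows essentially the same route as the paper: testing against $\varphi u^{p-1}$, integrating by parts in the cross-term $\io u^p\nabla\varphi\cdot\nabla v$ to shift the derivative from $v$ onto $u$, and then using the choice $\eta=\frac{1}{2(p+1)}$ together with Young's inequality and the $L^q$-bounds on $v$ from Lemma~\ref{lm:v_lq} to absorb all error terms. The only (inessential) difference is in the closing step: you retain the superlinear dissipation $\io\varphi u^{p+1}$ and arrive at $y'+C'y^{1+1/p}\le C(1+y)$, whereas the paper simply drops the good terms after absorption and concludes via the linear Gronwall inequality $y'\le Cy+C$.
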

\begin{proof}
Given $T\in(0,\infty)$ such that $T\leq \Tmax$, we test \eqref{system:u} against $\varphi u^{p-1}$ and see from \eqref{system:bc}, integration by parts and appropriate reexpression of some integrands that
 \begin{align}\label{b1}
  \f1p \ddt \io φu^p &=\io \varphi u^{p-1}\Delta u-\io\varphi u^{p-1}\nabla u\cdot\nabla v-\io\varphi u^p\Delta v+\io\kappa\varphi u^p-\io\mu\varphi u^{p+1}\nn\\
  &=-(p-1)\io \varphi u^{p-2}|\nabla u|^2-\io u^{p-1}\nabla\varphi\cdot\nabla u-\frac{1}{p}\io\varphi\nabla u^p\cdot \nabla v-\io\varphi u^p\Delta v\nn\\
  &\pe+\io\kappa\varphi u^p-\io\mu\varphi u^{p+1}\nn\\
  &\le-(p-1)\io \varphi u^{p-2}|\nabla u|^2-\frac{1}{p}\io\nabla u^p\cdot\nabla\varphi+\frac{1}{p}\io\varphi u^p\Delta v \nn\\
  &\pe+\frac{1}{p}\io u^p\nabla\varphi\cdot\nabla v-\io \varphi u^p\Delta v+\io\kappa\varphi u^p-\mu_0\io\varphi u^{p+1}\qquad\text{in }(0,T),
 \end{align}
 where in the last step we have made use of the fact that $\mu \ge \mu_0$ in $\supp \varphi$.
 Herein, we find from \eqref{system:v} that
 \begin{align*}
 \io\varphi u^p\Delta v= \io\varphi u^p(v-u)= \io \varphi u^p v-\io\varphi u^{p+1}\qquad\text{in }(0,T),
 \end{align*}
 and, since $\partial_\nu\varphi=0$ on $\partial\Omega$, another integration by parts moreover entails that
 \begin{align*}
 -\frac{1}{p}\io\nabla u^p\cdot\nabla\varphi=\frac{1}{p}\io u^p\Delta\varphi\qquad\text{in }(0,T).
 \end{align*}
 Plugging these back into \eqref{b1} and neglecting the nonpositive term $-\frac{p-1}{p}\io\varphi u^p v$ then yields
 \begin{align}\label{firstestimateddtup}
 \frac{1}{p} \ddt \io\varphi u^p&\leq -(p-1)\io \varphi u^{p-2}|\nabla u|^2+\frac{1}{p}\io u^p\Delta\varphi+\frac{p-1}{p}\io\varphi u^{p+1}\nn\\
 &\pe+\frac{1}{p}\io u^p\nabla\varphi\cdot\nabla v+\io\kappa\varphi u^p-\mu_0 \io \varphi u^{p+1}\qquad\text{in }(0,T).
 \end{align}

In the term $\f1p\io u^p∇φ\cdot∇v$, we again integrate by parts and make use of $\partial_\nu \varphi = 0$ on $\partial \Omega$ to obtain that
 \begin{align}\label{thebadintegral}
 \f1p \io u^p∇φ\cdot∇v &= -\io u^{p-1}v∇u\cdot∇φ - \f1p\io u^pvΔφ \qquad\text{in }(0,T).
 \end{align}
 Now, since the assumed condition $p<\frac{1}{(1-\mu_0)_+}$ entails that $\mu_0-\frac{p-1}{p}>0$ we can pick $\eps=\eps(\mu_0,p)>0$ satisfying
 \begin{align}\label{choiceofeps}
 \eps<p-1\quad\text{and}\quad 3\eps<\mu_0-\frac{p-1}{p}.
 \end{align}
 By virtue of Young's inequality, we moreover find $C_1=C_1(\eps)>0$ and $C_2=C_2(\eps)>0$ such that
 \begin{align}\label{badintegral-firstpart}
 &\pe-\io u^{p-1}v∇u\cdot∇φ \nn \\
 &\le ε\!\io φu^{p-2}|∇u|^2 + C_1\!\io \f{|∇φ|^2}{φ} v^2 u^p \nn\\
 &=ε\!\io φu^{p-2}|∇u|^2 + C_1\!\io \f{|∇φ|^2}{φ} v^2 φ^{\f{p}{p+1}}u^pφ^{-\f{p}{p+1}}\nn\\
 &\le ε\!\io φu^{p-2}|∇u|^2 + ε\!\io φu^{p+1} + C_2\! \io φ^{-p}\f{|∇φ|^{2(p+1)}}{φ^{p+1}} v^{2(p+1)}\nn\\
 &= ε\!\io φu^{p-2}|∇u|^2 + ε\!\io φu^{p+1} + C_2\! \io \f{|∇φ|^{2(p+1)}}{φ^{2p+1}} v^{2(p+1)}\qquad\text{in }(0,T).
\end{align}
Similarly, Young's inequality provides $C_3=C_3(\eps) > 0$ such that
 \begin{equation}\label{badintegral-secondpart}
- \f1p \io u^pvΔφ \le ε\!\io φu^{p+1} + C_3\! \io  \frac{|Δφ|^{p+1}}{\varphi^p} v^{p+1} \qquad\text{in }(0,T).
 \end{equation}

Since $\eta = \frac1{2(p+1)}$, and since \eqref{eq:cutoff:grad_est} thus entails
  \begin{align}%\label{eq:testfuncestimates-1}
        \varphi^{-2p-1} |\nabla \varphi|^{2(p+1)} 
    &\le C_\varphi^{2p+2} \varphi^{1-2(p+1)\eta}
    =   C_\varphi^{2p+2} \nn
  \intertext{and}\label{eq:testfuncestimates-2}
        \varphi^{-p} |\Delta \varphi|^{p+1}
    &\le C_\varphi^{p+1} \varphi^{1-2(p+1)\eta}
    =   C_\varphi^{p+1}
  \end{align}
  in $(0, T)$,
  \eqref{badintegral-firstpart} and \eqref{badintegral-secondpart} turn  \eqref{thebadintegral} into 
  \begin{align}\label{badintegral-thirdpart}
   \f1p \io u^p∇φ\cdot∇v \le ε\io φu^{p-2}|∇u|^2 + 2ε\io φu^{p+1} + C_4 \io v^{2(p+1)} + C_4 \io v^{p+1}
  \end{align}
 on $(0,T)$, with $C_4=C_4(\varepsilon,p,\varphi,\eta):=\max\big\{C_2 C_\varphi^{2p+2},C_3 C_\varphi^{p+1}\big\}>0$. Likewise, we conclude from Young's inequality, \eqref{eq:testfuncestimates-2} and $p>1$ that there is $C_5=C_5(\eps,p,\varphi,\eta)>0$ satisfying
 \begin{align}\label{other-badintegral}
  \f1p\io u^pΔφ \le ε{C_\varphi^{-\frac {p+1}p}}
  \io φ u^{p+1} φ^{-\f{p}{p}}|Δφ|^{\f{p+1}p} + C_5 \le ε\io φ u^{p+1} +C_5\qquad\text{in }(0,T). 
 \end{align}

Therefore, inserting \eqref{badintegral-thirdpart} and \eqref{other-badintegral} into \eqref{firstestimateddtup} and recalling \eqref{choiceofeps}, which entails $p-1-\eps > 0$ and $\mu_0 - \frac{p-1}{p} - 3\eps > 0$, yields
\begin{align*}%\label{secondtestimateddtup}
  \f1p \ddt \io φu^p 
  &\le -\left( p-1- \varepsilon \right) \io φ u^{p-2} |∇u|^2  + C_4 \io v^{2(p+1)}+C_4 \io v^{p+1} \nn \\ &\pe+ \io κφu^p -  \kl{\mu_0-\f{p-1}p -3ε} \io φu^{p+1}+C_5 \nn \\
  &\le C_4 \io v^{2(p+1)}+C_4 \io v^{p+1} + \|\kappa\|_{\leb\infty} \io φu^p +C_5 \qquad\text{in }(0,T).
\end{align*}
Due to the bounds on $v$ and $κ$ asserted in Lemma~\ref{lm:v_lq} and by the inclusion $\kappa \in \con0$, respectively, the (time-local) estimate \eqref{eq:u_lp:statement} then follows by an ODE comparison argument for a suitably chosen $C > 0$.
\end{proof}

In combination, Lemma~\ref{lm:cutoff} and Lemma~\ref{lm:u_lp} show that if $x_0 \in \Ombar$ is such that $\mu(x_0) > 0$,
then we can find an open neighbourhood $V$ of $x_0$ in $\Ombar$ such that $\int_V u^p$ remains bounded (locally in time) throughout evolution.
Since $\Omega$ is assumed to be a two-dimensional domain,
one generally expects that this bound can be turned into fineteness of $\|u\|_{L^\infty(V' \times (0, T))}$ for some open $V' \subset V$ and all finite $T \in (0, \tmax]$.

There are multiple reasons for this expectation:
On the one hand, in the case of $V=\Omega$ one obtains the desired estimate even for $V'=\Omega$ and $\kappa = \mu \equiv 0$
(see for instance \cite[Lemma 3.2]{BellomoEtAlMathematicalTheoryKeller2015}). 
On the other hand, for the Cauchy problem in $\R^n$ and $\kappa = \mu \equiv 0$ but general open $V \subset ℝ^n$, the $L^\infty$ bound follows from the $\eps$-regularity theorem proved in \cite{SugiyamaVarepsilonregularityTheorem2010}.

None of these results is applicable in our setting, however.
Instead, we next make use of elliptic regularity theory (and Lemma~\ref{lm:u_lp}) to obtain certain gradient bounds for the second solution component.
\begin{lem}\label{lm:varphi_v_w1q}
 Assume $\mu_0 \gt 0$, $p \in (1, \frac{1}{(1-\mu_0)_+})\cap(1,2)$, let $\varphi$ be as in Lemma~\ref{lm:u_lp}, $T \in (0, \tmax] \cap (0, \infty)$
 and let $q=\f{2p}{2-p}>2$. Then there is $C \gt 0$ such that
 \begin{align}\label{eq:varphi_v_w1q:est}
   \io |\nabla (\varphi v(\cdot, t))|^q \le C
   \qquad \text{for all $t \in (0, T)$}.
 \end{align}
\end{lem}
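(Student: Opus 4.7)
The plan is to rewrite the estimate as an elliptic regularity statement for the product $w \defs \varphi v$ and exploit that in two spatial dimensions the $W^{2,p}$ norm of $w$ controls the $W^{1,q}$ norm for the specified exponent.

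First I would compute, using \eqref{system:v}, the PDE satisfied by $w=\varphi v$:
\begin{align*}
  -\Delta w + w
  &= -\varphi \Delta v - 2\nabla\varphi\cdot\nabla v - v\Delta\varphi + \varphi v
  =  \varphi u - 2\nabla\varphi\cdot\nabla v - v\Delta\varphi
  \sfed f
  \qquad \text{in } \Omega.
\end{align*}
The boundary condition is homogeneous Neumann: because $\partial_\nu v=0$ on $\partial\Omega$ from \eqref{system:bc} and because $\partial_\nu\varphi=0$ on $\partial\Omega$ by the assumption on $\varphi$ inherited from Lemma~\ref{lm:u_lp} (via Lemma~\ref{lm:cutoff}), we have
\begin{align*}
  \partial_\nu w = v\,\partial_\nu \varphi + \varphi\,\partial_\nu v = 0
  \qquad \text{on } \partial\Omega.
\end{align*}

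Next I would apply standard $L^p$ elliptic regularity for the Neumann problem on the smooth bounded domain $\Omega$, which yields $C_1 > 0$ (depending only on $\Omega$ and $p$) with
\begin{align*}
  \norm[W^{2,p}(\Omega)]{w(\cdot,t)} \le C_1 \norm[L^p(\Omega)]{f(\cdot,t)}
  \qquad \text{for all } t \in (0,T).
\end{align*}
Since $p<2=n$, the Sobolev embedding $W^{2,p}(\Omega)\hookrightarrow W^{1,q}(\Omega)$ with $q=\frac{2p}{2-p}$ then turns any $L^p$ bound of $f$ into the desired estimate \eqref{eq:varphi_v_w1q:est}. It therefore remains to bound $\norm[L^p(\Omega)]{f(\cdot,t)}$ uniformly for $t\in(0,T)$.

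For the three contributions to $f$, I would argue as follows. Using $0\le\varphi\le 1$ and Lemma~\ref{lm:u_lp},
\begin{align*}
  \io |\varphi u|^p = \io \varphi^p u^p \le \io \varphi u^p \le C_2
  \qquad \text{in } (0,T).
\end{align*}
For the term $v\Delta\varphi$, the inclusion $\varphi\in C^2(\Ombar)$ gives $\Delta\varphi\in L^\infty(\Omega)$, so Lemma~\ref{lm:v_lq} (applied with exponent $p<\infty$) supplies a uniform bound. For the remaining, most delicate term $\nabla\varphi\cdot\nabla v$, I would again use $\nabla\varphi\in L^\infty(\Omega)$ and then invoke Lemma~\ref{lm:v_lq}: since $p\in(1,2)$ lies in the range for which the $W^{1,p}(\Omega)$ bound on $v$ is available, we get a uniform $L^p$ bound on $\nabla v$ and hence on $\nabla\varphi\cdot\nabla v$. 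Combining these three bounds yields the required control of $\norm[L^p(\Omega)]{f(\cdot,t)}$, and the proof is complete.

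The main obstacle is merely to notice that the "bad" product $\nabla\varphi\cdot\nabla v$ sits in $L^p$ precisely because the assumption $p<2$ matches the restriction under which Lemma~\ref{lm:v_lq} provides a $W^{1,p}$ estimate on $v$; once this is seen, everything else is a direct application of Neumann elliptic regularity and Sobolev embedding in dimension two. Note in particular that the sharper properties \eqref{eq:cutoff:grad_est} of $\varphi$ are not required here — only $\varphi\in C^2(\Ombar)$ with vanishing normal derivative is used, the full strength of Lemma~\ref{lm:cutoff} having already been spent in Lemma~\ref{lm:u_lp}.
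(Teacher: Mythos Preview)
Your proof is correct and follows essentially the same approach as the paper: both derive the elliptic equation $-\Delta(\varphi v)+\varphi v=\varphi u-2\nabla\varphi\cdot\nabla v-v\Delta\varphi$, apply $L^p$ elliptic regularity to obtain a $W^{2,p}$ bound, control the right-hand side via Lemma~\ref{lm:u_lp} (for $\varphi u$) and Lemma~\ref{lm:v_lq} (for the two terms involving $v$ and $\nabla v$, crucially using $p<2$ for the latter), and conclude via the Sobolev embedding $W^{2,p}(\Omega)\hookrightarrow W^{1,q}(\Omega)$. Your observation that only $\varphi\in C^2(\Ombar)$ with $\partial_\nu\varphi=0$ is needed here (not the pointwise estimates \eqref{eq:cutoff:grad_est}) is accurate and worth noting.
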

\begin{proof}
Straightforward computations drawing on \eqref{system:v} show that
\begin{align*}
-\Delta(\varphi v)+\varphi v=-v\Delta\varphi+\varphi u-2\nabla\varphi\cdot\nabla v\qquad\text{in }\Omega\times(0,\Tmax).
\end{align*}
Now, given $p\in(1, \frac{1}{(1-\mu_0)_+})\cap(1,2)$ we find that according to elliptic regularity theory (e.g.\ \cite[Theorem I.19.1]{friedman2011partial}) employed to the uniformly elliptic operator given by $-(\Delta-1)$, there is $C_1=C_1(p)>0$ such that
\begin{align*}
\norm[\Wom{2}{p}]{\varphi v(\cdot,t)}&\leq C_1\norm[\Lom{p}]{-v(\cdot,t)\Delta\varphi+\varphi u(\cdot,t)-2\nabla\varphi\cdot\nabla v(\cdot,t)}+C_1\norm[\Lom{p}]{\varphi v(\cdot,t)}
\end{align*}
for $t\in(0,\Tmax)$,
so that $\varphi\in C^2(\bar{\Omega})$ entails that there is some $C_2=C_2(p,\varphi)>0$ satisfying
\begin{align*}
\norm[\Wom{2}{p}]{\varphi v}\leq C_2\norm[\Lom{p}]{v}+C_2\norm[\Lom{p}]{\nabla v}+C_2\norm[\Lom{p}]{u}\qquad\text{in }(0,\Tmax).
\end{align*}
Here, noticing that the restrictions $p\in(1,2)$ and $p\in(1, \frac{1}{(1-\mu_0)_+})$ make both Lemma~\ref{lm:v_lq} and Lemma~\ref{lm:u_lp} applicable, we conclude that for any finite $T\leq \Tmax$ there is $C_3=C_3(\mu_0,p,\varphi,T)>0$ such that
\begin{align*}
\norm[\Wom{2}{p}]{\varphi v(\cdot,t)}\leq C_3\qquad\text{for all }t\in(0,T).
\end{align*}
Due to the Sobolev embedding $W^{2,p}(\Omega)\hookrightarrow W^{1,q}(\Omega)$ with $q:=\frac{2p}{2-p}>2$ due to $p>1$, this implies \eqref{eq:varphi_v_w1q:est}.  
 %%% 
%  Elliptic regularity (\red{maybe Theorem~19.1 in Friedman?) gives
%  \begin{align*}
%    \norm[\Wom2{p}]{φv}\le C\left[\norm[\Lom{p}]{φu} + \norm[\Lom{p}]{v}\right]
%  \end{align*}
%  and thus
%  \begin{align*}
%    \norm[\Wom2{p}]{φv}\le C'
%  \end{align*}
%  by Lemma~\ref{lm:u_lp} and Lemma~\ref{lm:v_lq}.
%  Due to a Sobolev embedding theorem, this implies \eqref{eq:varphi_v_w1q:est} with some $C \gt 0$
%  for $q \defs \frac{2p}{2-p}$ if $p < 2$ and $q \defs 3$, say, if $p \ge 2$.
%  As $p > 1$, we have $q > 2$.
 %%%
\end{proof}

\begin{lem}\label{lm:local_bound_1}
  Let $x_0 \in \Ombar$ with $\mu(x_0) \gt 0$ and $T \in (0, \tmax] \cap (0, \infty)$.
  Then there exist an open neighbourhood $V$ of $x_0$ in $\Ombar$, $q > 2$ and $C > 0$ such that
  \begin{align}\label{eq:local_bound_1:est}
    \|\nabla v(\cdot, t)\|_{L^q(V)} < C
    \qquad \text{for all $t \in (0, T)$}.
  \end{align}
\end{lem}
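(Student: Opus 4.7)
The plan is to combine the localized $L^p$-bound on $u$ from Lemma~\ref{lm:u_lp} with the elliptic regularity estimate of Lemma~\ref{lm:varphi_v_w1q}, exploiting that a cut-off function provided by Lemma~\ref{lm:cutoff} can be arranged to equal $1$ on an open neighbourhood of $x_0$, so that the weighted gradient $\nabla(\varphi v)$ coincides with $\nabla v$ there.

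First, I would set $\mu_0 \defs \frac{\mu(x_0)}{2} \gt 0$ and invoke continuity of $\mu$ to obtain an open neighbourhood $W_0$ of $x_0$ in $\Ombar$ on which $\mu \gt \mu_0$. Next I pick any $p \in (1, \frac{1}{(1-\mu_0)_+}) \cap (1, 2)$, which is non-empty irrespective of whether $\mu_0 \ge 1$ (in which case the intersection equals $(1,2)$) or $\mu_0 \in (0,1)$ (where $\frac{1}{1-\mu_0} \gt 1$, leaving room near $1$). Correspondingly I set $\eta \defs \frac{1}{2(p+1)} \in (0, \tfrac12)$ and $q \defs \frac{2p}{2-p} \gt 2$.

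Lemma~\ref{lm:cutoff}, applied with $G = \Omega$ and this $x_0$, then provides a relatively open set $W \subset \Ombar$ containing $x_0$; after shrinking if necessary, I may assume $W \subset W_0$, so that $\mu \gt \mu_0$ throughout $W$. I then choose an open set $V \subset \Ombar$ with $x_0 \in V$ and $\overline{V} \subset W$, and apply Lemma~\ref{lm:cutoff} with $K \defs \overline{V}$ and the $\eta$ fixed above to obtain $\varphi \in C^2(\Ombar)$ satisfying \eqref{eq:cutoff:varphi} (in particular $\varphi \equiv 1$ on $V$ and $\supp \varphi \subset W \subset \mu^{-1}((\mu_0, \infty))$) as well as \eqref{eq:cutoff:grad_est}. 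This is precisely the set of hypotheses on $\varphi$ required by Lemma~\ref{lm:u_lp} and hence by Lemma~\ref{lm:varphi_v_w1q}, which yields a constant $C \gt 0$ such that $\io |\nabla(\varphi v(\cdot, t))|^q \le C$ for all $t \in (0, T)$.

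Finally, since $\varphi \equiv 1$ on $V$, we have $\nabla \varphi \equiv 0$ there, so that $\nabla(\varphi v) = \nabla v$ pointwise on $V$. Consequently,
\begin{align*}
  \|\nabla v(\cdot, t)\|_{L^q(V)}^q = \int_V |\nabla v(\cdot, t)|^q \le \io |\nabla(\varphi v(\cdot, t))|^q \le C
  \qquad \text{for all } t \in (0, T),
\end{align*}
which, upon enlarging $C$ slightly to turn $\le$ into $\lt$, gives \eqref{eq:local_bound_1:est}. The argument is essentially an assembly of the preceding lemmas; the only mildly delicate point is arranging the three nested neighbourhoods $V \subset W \subset W_0$ so that the support constraint $\supp \varphi \subset \mu^{-1}((\mu_0, \infty))$ demanded by Lemma~\ref{lm:u_lp} is simultaneously compatible with $\varphi \equiv 1$ on $V$, which however is automatic once $\overline{V} \subset W$ is secured.
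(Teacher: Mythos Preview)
Your proof is correct and follows essentially the same route as the paper's: fix $\mu_0 = \mu(x_0)/2$, choose $p \in (1,\frac{1}{(1-\mu_0)_+}) \cap (1,2)$ and $\eta = \frac{1}{2(p+1)}$, construct a cut-off $\varphi$ via Lemma~\ref{lm:cutoff} with $\varphi \equiv 1$ on a neighbourhood $V$ of $x_0$ and support contained in $\mu^{-1}((\mu_0,\infty))$, invoke Lemma~\ref{lm:varphi_v_w1q} for the $W^{1,q}$ bound on $\varphi v$ with $q = \frac{2p}{2-p}$, and restrict to $V$. The only cosmetic difference is that the paper intersects $W$ with $\mu^{-1}((\mu_0,\infty))$ directly to form a single set $U$, whereas you shrink $W$ into a separately obtained $W_0$; both achieve the same containment $\supp\varphi \subset \mu^{-1}((\mu_0,\infty))$.
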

\begin{proof}
  We set $\mu_0 \defs \frac{\mu(x_0)}{2} > 0$ as well as $U \defs \mu^{-1}((\mu_0, \infty)) \cap W$, where $W$ is the neighbourhood (open in $\Ombar$) of $x_0$ given by Lemma~\ref{lm:cutoff}.
  Moreover, we fix an open neighbourhood $V$ of $x_0$ in $\Ombar$ with $K \defs \bar V \subset U$
  and choose $p \in (1, \frac{1}{(1-\mu_0)_+})\cap(1,2)$ as well as $\eta \defs \frac{1}{2(p+1)}$.
  
  Lemma~\ref{lm:cutoff} then provides us with $\varphi \in C^2(\Ombar)$ and $C_\varphi \gt 0$ such that
  \eqref{eq:cutoff:varphi} and \eqref{eq:cutoff:grad_est} are fulfilled.
  In particular, Lemma~\ref{lm:varphi_v_w1q} is applicable so that letting $q:=\frac{2p}{2-p}>2$ we obtain $C=C(\mu_0,p,\varphi,T) > 0$ such that \eqref{eq:varphi_v_w1q:est} holds.
  Since $\varphi\equiv 1$ in $V$ and thus 
  \begin{align*}
        \|\nabla v(\cdot, t)\|_{L^q(V)}
    =   \|\nabla (\varphi v(\cdot, t))\|_{L^q(V)}
    \le \|\nabla (\varphi v(\cdot, t))\|_{L^q(\Omega)}
    \qquad \text{for all $t \in (0, T)$},
  \end{align*}
  this entails \eqref{eq:local_bound_1:est}.
\end{proof}

\section{\texorpdfstring{A localized $L^\infty$ bound for $u$}{A localized L-infinity bound for u}}\label{sec5}
In the case of $\varphi\equiv1$, the bounds on $\varphi^{\frac1p} u$ and $\nabla (\varphi v)$ in \eqref{eq:u_lp:statement} and \eqref{eq:varphi_v_w1q:est} can immediately enter a semigroup-based reasoning and provide further bounds for $u$ (see \cite[Lemma 3.2]{BellomoEtAlMathematicalTheoryKeller2015}). If, however, $\varphi$ is nonconstant and can vanish in some parts of the domain, 
\eqref{eq:u_lp:statement} and \eqref{eq:varphi_v_w1q:est} no longer control the precise combination terms to which the heat semigroup would need to be applied (cf.\ also \eqref{l_infty_bdd:est1} below).
% Unlike in the case where $\varphi \equiv 1$ (which is treated for instance in \cite[Lemma 3.2]{BellomoEtAlMathematicalTheoryKeller2015}),
% for $\varphi$ as in Lemma~\ref{lm:u_lp} and Lemma~\ref{lm:varphi_v_w1q}, it seems unclear how to infer further bounds for $\varphi u$ from \eqref{eq:u_lp:statement} and \eqref{eq:varphi_v_w1q:est}.
Instead, we will now first choose a \emph{new} cut-off function $\tilde\varphi$ with support in $V$ for the set $V$ given by Lemma~\ref{lm:local_bound_1},
so that we can control not only $\|\varphi v(\cdot, t)\|_{\sob1q}$ but even the norm $\|v(\cdot, t)\|_{W^{1, q}(\supp \tilde\varphi)}$ of $v$ itself, without spatial weight, for some $q > 2$ and $t \in (0, \tmax)$.

While restricting the domain of interest once, or even finitely many times, to smaller open sets is evidently fine,
applying this procedure infinitely often could be problematic, as the sets' size could shrink to zero. 
In particular, Moser-type iterations, as used frequently for quasilinear chemotaxis systems (see for instance the quite general Lemma~A.1 in \cite{TaoWinklerBoundednessQuasilinearParabolic2012}),
appear to be inadequate for our purposes.
Fortunately, for linear elliptic operators, one can obtain an $L^\infty$ bound in a single (additional) step by making use of semigroup arguments
(cf.\ the proof of \cite[Lemma 3.2]{BellomoEtAlMathematicalTheoryKeller2015}, for instance).
  
As a preparation for the localized $L^\infty$ bound for $u$, we first state the following elementary
\begin{lem}\label{lm:eta_trick}
  Let $T \in (0, \tmax] \cap (0, \infty)$, $\lambda' \in (1, \infty)$, $\eta' \in (0, \frac1{\lambda'}]$ and $\varphi \in \con0$.
  Then there exists $C > 0$ such that
  \begin{align*}
    \|\varphi^{1-\eta'} u(\cdot, t)\|_{\leb{\lambda'}} \le C \|\varphi u(\cdot, t)\|_{\leb \infty}^{1-\eta'}
    \qquad \text{for all $t \in (0, T)$}.
  \end{align*}
\end{lem}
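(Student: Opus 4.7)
The proof should be a short elementary computation; the point of the lemma is purely structural, preparing the semigroup argument in the forthcoming Lemma~\ref{lm:l_infty_bdd} where the factor $\varphi^{1-\eta'}$ will naturally appear through a heat-semigroup smoothing estimate applied to $\varphi u$. The key trick is the pointwise factorisation
\[
  \varphi^{1-\eta'} u = (\varphi u)^{1-\eta'} \cdot u^{\eta'} \qquad \text{in } \Ombar \times (0, \tmax),
\]
which splits the quantity of interest into a factor already governed by the right-hand side of the claim and a small power of $u$ alone.

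Using this identity, I would raise both sides to the $\lambda'$-th power, integrate over $\Om$, and pull the $L^\infty$-norm out of the integral to obtain
\[
  \io (\varphi^{1-\eta'} u(\cdot, t))^{\lambda'} \dx \le \|\varphi u(\cdot, t)\|_{\Lom\infty}^{(1-\eta')\lambda'} \io u^{\eta' \lambda'}(\cdot, t) \dx \qquad \text{for all } t \in (0, T).
\]
Since by assumption $\eta' \lambda' \le 1$, Hölder's inequality (or Jensen's) yields
\[
  \io u^{\eta' \lambda'}(\cdot, t) \dx \le |\Om|^{1 - \eta' \lambda'} \|u(\cdot, t)\|_{\Lom 1}^{\eta' \lambda'},
\]
and Lemma~\ref{lm:u_l1} provides some $C_T > 0$ such that $\|u(\cdot, t)\|_{\Lom 1} \le C_T$ uniformly in $t \in (0, T)$, as $T$ is finite with $T \le \tmax$.

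Combining these estimates and taking the $\lambda'$-th root, I obtain the claim with
\[
  C \defs \left( |\Om|^{1 - \eta' \lambda'} C_T^{\eta' \lambda'} \right)^{1/\lambda'},
\]
which depends only on $T$, $\lambda'$, $\eta'$, $\Om$ and the initial data (through Lemma~\ref{lm:u_l1}), but not on $t$ or, indeed, on $\varphi$ itself. There is no real obstacle here: the hypothesis $\eta' \le 1/\lambda'$ is exactly what is needed to keep the residual power of $u$ sub-unitary so that the spatially global $L^1$-bound from Section~\ref{sec3} suffices.
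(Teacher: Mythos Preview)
Your proof is correct and follows essentially the same approach as the paper: the pointwise factorisation $\varphi^{1-\eta'} u = (\varphi u)^{1-\eta'} u^{\eta'}$ (equivalently, the paper writes $u = u^{1-\eta'} u^{\eta'}$), followed by the observation that $\eta'\lambda' \le 1$ allows the residual factor $\io u^{\eta'\lambda'}$ to be controlled via the global $L^1$-bound from Lemma~\ref{lm:u_l1}. Your write-up just spells out the H\"older step more explicitly than the paper's one-line proof.
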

\begin{proof}
  The statement follows immediately by writing $u = u^{1-\eta'} u^{\eta'}$ and applying Lemma~\ref{lm:u_l1},
  which provides a bound for $\sup_{t \in (0, T)} \io u^{\eta' \lambda'}(\cdot, t)$, due to $\eta'\lambda'\leq 1$.
\end{proof}

We directly make use of this lemma in a semigroup representation of the solution, combining it with bounds prepared in earlier sections.
\begin{lem}\label{lm:l_infty_bdd}
  Let $x_0 \in \Ombar$ with $\mu(x_0) > 0$ and $T \in (0, \tmax] \cap (0, \infty)$.
  Then there exists an open neighbourhood $V$ of $x_0$ in $\Ombar$ such that for any compact $K \subset V$,
  we can find $\varphi \in C^{2}(\Ombar)$ fulfilling \eqref{eq:cutoff:varphi} and $C \gt 0$ such that
  \begin{align}\label{eq:l_infty_bdd:est}
    \|\varphi u(\cdot, t)\|_{\leb\infty} \le C
    \qquad \text{for all $t \in (0, T)$}.
  \end{align}
\end{lem}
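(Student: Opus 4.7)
The plan is to control $\|w(\cdot,t)\|_{L^\infty}$ for $w := \varphi u$ by means of a Duhamel representation based on the Neumann heat semigroup. As a preparation, I would first invoke Lemma~\ref{lm:local_bound_1} to produce an open neighbourhood $V_0$ of $x_0$ in $\Ombar$ and an exponent $q > 2$ such that $\|\nabla v(\cdot, t)\|_{L^q(V_0)}$ remains bounded on $(0, T)$, and then fix the neighbourhood $V$ claimed in the lemma as a smaller open set with $\overline{V} \subset V_0$ on which $\mu \ge \mu_0 > 0$. For a given compact $K \subset V$, Lemma~\ref{lm:cutoff} with a parameter $\eta > 0$ chosen sufficiently small provides the cut-off $\varphi \in C^2(\Ombar)$ satisfying both \eqref{eq:cutoff:varphi} and \eqref{eq:cutoff:grad_est}. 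In addition I construct an auxiliary cut-off $\tilde \varphi$ with $\tilde \varphi \equiv 1$ on $\supp \varphi$ and $\supp \tilde \varphi \subset V_0$ such that Lemma~\ref{lm:u_lp} applies to $\tilde \varphi$, furnishing the additional local bound $\|u(\cdot, t)\|_{L^p(\supp \varphi)} \le C$ uniformly on $(0, T)$ for some $p > 1$.

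A direct computation based on \eqref{system:u} and \eqref{system:v} yields the evolution equation
\begin{align*}
  w_t = \Delta w - \nabla \cdot (w \nabla v) - 2 \nabla \cdot (u \nabla \varphi) + u \Delta \varphi + u \nabla v \cdot \nabla \varphi + \varphi \kappa u - \varphi \mu u^2
\end{align*}
in $\Omega \times (0, T)$, with $\partial_\nu w = 0$ and all flux contributions vanishing at $\partial \Omega$ thanks to the Neumann conditions on $u$, $v$, and $\varphi$. Duhamel's formula combined with the standard smoothing estimates $\|e^{\tau \Delta} f\|_{L^\infty} \le C(1 + \tau^{-1/r})\|f\|_{L^r}$ for $r > 1$ and $\|e^{\tau \Delta} \nabla \cdot F\|_{L^\infty} \le C(1 + \tau^{-\frac12 - \frac1q})\|F\|_{L^q}$ for $q > 2$ then reduces everything to $L^r$- or $L^q$-estimates of the right-hand side in terms of $\|w(s)\|_{L^\infty}$. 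The terms $w \nabla v$, $\varphi \kappa u$ and $\varphi \mu u^2$ yield linear-in-$\|w\|_{L^\infty}$ bounds: for example, since $\supp \varphi \subset V_0$,
\begin{align*}
  \|w \nabla v\|_{L^q(\Omega)} \le \|w\|_{L^\infty(\Omega)} \|\nabla v\|_{L^q(V_0)} \le C \|w\|_{L^\infty(\Omega)},
\end{align*}
and the remaining two employ the auxiliary $L^p$ bound for $u$ on $\supp \varphi$.

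The main obstacle is the treatment of the terms involving derivatives of $\varphi$, namely $\nabla \cdot (u \nabla \varphi)$, $u \Delta \varphi$, and $u \nabla v \cdot \nabla \varphi$: only a low-exponent $L^p$ bound on $u$ (with $p$ close to $1$) is available near $\supp \varphi$, which is insufficient for the semigroup estimates requiring $L^q$ with $q > 2$. The resolution is the combination of the cut-off estimate \eqref{eq:cutoff:grad_est} with Lemma~\ref{lm:eta_trick}: rewriting $|u \nabla \varphi| \le C_\varphi \varphi^{1-\eta} u$ and $|u \Delta \varphi| \le C_\varphi \varphi^{1-2\eta} u$ and applying Lemma~\ref{lm:eta_trick} (admissible precisely because the parameter $\eta$ was chosen small enough to satisfy its hypothesis) yields sub-linear bounds such as $\|u \nabla \varphi\|_{L^q} \le C \|w\|_{L^\infty}^{1-\eta}$ and $\|u \Delta \varphi\|_{L^r} \le C \|w\|_{L^\infty}^{1-2\eta}$, with a similar estimate for the mixed term. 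Collecting all contributions produces a Volterra inequality of the form
\begin{align*}
  \|w(t)\|_{L^\infty} \le C_1 + C_2 \int_0^t \bigl(1 + (t-s)^{-\frac12 - \frac1q}\bigr) \bigl(\|w(s)\|_{L^\infty} + \|w(s)\|_{L^\infty}^{1-\eta}\bigr) \ds
\end{align*}
on $(0, T)$, whose kernel is integrable because $q > 2$, so that a standard Gronwall-type argument for weakly singular Volterra inequalities produces the desired uniform bound $\|w(\cdot, t)\|_{L^\infty} \le C$ on $(0, T)$.
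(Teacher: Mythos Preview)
Your approach is correct but differs from the paper's in two notable ways. First, the paper drops the term $-\mu\varphi u^2$ at the outset by passing from the equation for $\varphi u$ to the differential \emph{inequality} $(\varphi u)_t \le \Delta(\varphi u) + \dots$ and then invoking comparison; you instead keep this term and control it via the auxiliary cut-off $\tilde\varphi$ and Lemma~\ref{lm:u_lp}, which is legitimate but adds a layer of set-juggling the paper avoids. Second, and more interestingly, the paper arranges for \emph{every} term on the right-hand side to be sublinear in $\|\varphi u\|_{\leb\infty}$: even for the ``good'' contributions $\varphi u\nabla v$ and $\kappa\varphi u$ it writes $\varphi = \varphi^{\eta}\cdot\varphi^{1-\eta}$, discards the factor $\varphi^{\eta}\le 1$, and then applies Lemma~\ref{lm:eta_trick} to the remaining $\varphi^{1-\eta}u$. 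This yields the purely algebraic inequality $M(t)\le C\,M(t)^{1-\eta}$ for $M(t)=1+\sup_{s<t}\|\varphi u(s)\|_{\leb\infty}$, which closes immediately without any Gronwall machinery. Your route, by contrast, retains genuinely linear terms (from $w\nabla v$ and $\mu\varphi u^2$) and therefore lands on a weakly singular Volterra inequality; this still closes on the finite interval $(0,T)$ via a singular Gronwall lemma, but is less self-contained. In short: both arguments work, the paper's buys a one-line closure at the cost of the slightly non-obvious $\varphi^{\eta}\varphi^{1-\eta}$ splitting, while yours is perhaps more straightforward to discover but requires both an extra localized $L^p$ bound and an appeal to Volterra--Gronwall theory.
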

\begin{proof}
  First, Lemma~\ref{lm:local_bound_1} provides us with an open set $V \subset \Ombar$, $q > 2$ and $C_1 \gt 0$ such that $x_0 \in V$ and that \eqref{eq:local_bound_1:est} holds with $C$ replaced by $C_1$.
  Without loss of generality, we may assume $V \subset W$ for the set $W$ given by Lemma~\ref{lm:cutoff}.
  We now fix a compact $K \subset V$ and $\lambda \in (2, q)$, which is possible due to $q > 2$.

  Thus,
%  \begin{align}\label{eq:l_infty_bdd:eta}
%    \eta \defs \min\left\{\frac14, \frac{\lambda}{2}, \frac{q\lambda}{q-\lambda}\right\}
%  \end{align}
\begin{align}\label{eq:l_infty_bdd:eta}
    \eta \defs \min\left\{\frac14, \frac{1}{\lambda}, \frac{q-\lambda}{q\lambda}\right\}
  \end{align}
  is positive and finite.
  Moreover, by Lemma~\ref{lm:cutoff}, there exist $\varphi \in C^{2}(\Ombar)$ and $C_\varphi > 0$ satisfying \eqref{eq:cutoff:varphi} and \eqref{eq:cutoff:grad_est}. 
  Direct calculations show that
  \begin{align*}
          \Delta (\varphi u)
   % &=    \nabla \cdot (\varphi \nabla u + u \nabla \varphi)
   % =     \varphi \Delta u + \nabla u \cdot \nabla \varphi + \nabla \cdot (u \nabla \varphi)
    =     \varphi \Delta u - u \Delta \varphi + 2 \nabla \cdot (u \nabla \varphi)
%   \end{align*}
\quad \text{ and } \quad
%   \begin{align*}
          \nabla \cdot (\varphi u \nabla v)
    &=    \varphi \nabla \cdot (u \nabla v)
          + u \nabla \varphi \cdot \nabla v
  \end{align*}
  hold in $\Omega \times (0, T)$, whence
  we conclude from \eqref{system:u} that $\varphi u$ satisfies
  \begin{align*}
          (\varphi u)_t
    &=    \varphi \Delta u
          - \varphi \nabla \cdot (u \nabla v)
          + \kappa \varphi u
          - \mu \varphi u^2 \\
    &\le  \Delta(\varphi u) + u \Delta \varphi - 2 \nabla \cdot (u \nabla \varphi)
          - \nabla \cdot (\varphi u \nabla v) + u \nabla \varphi \cdot \nabla v
          + \kappa \varphi u
  \end{align*}
  in $\Omega \times (0, T)$ by the nonnegativity of $\mu$, $\varphi$ and $u$ throughout $\Omega\times(0,T)$.
  Since also
  \begin{align*}
      \partial_\nu (\varphi u)
    = u \partial_\nu \varphi + \varphi \partial_\nu u
    = 0
    \qquad \text{on $\partial \Omega \times (0, T)$}
  \end{align*}
  by \eqref{eq:cutoff:varphi} and \eqref{system:bc},
  we may apply well-known $L^p$-$L^q$ estimates (cf.\ \cite[Lemma~1.3]{WinklerAggregationVsGlobal2010}) to obtain that with some $C_2>0$ 
  \begin{align}\label{l_infty_bdd:est1}
    &\pe  \|\varphi u(\cdot, t)\|_{\leb \infty} \nn\\
    &\le  \|\ure^{t \Delta} (\varphi u_0)\|_{\leb \infty} \nn\\
    &\pe  + \int_0^t \left\| \ure^{(t-s) \Delta} \bigg(
              u(\cdot, s) \Delta \varphi + u(\cdot, s) \nabla \varphi \cdot \nabla v(\cdot, s) + \kappa \varphi u(\cdot, s)
            \bigg) \right\|_{\leb \infty} \mathrm{d}s\nn\\
    &\pe  + \int_0^t \left\| \ure^{(t-s) \Delta} \nabla \cdot \bigg(
              2u(\cdot, s) \nabla \varphi + \varphi u(\cdot, s) \nabla v(\cdot, s) 
            \bigg) \right\|_{\leb \infty} \mathrm{d}s \nn\\
    &\le  \|u_0\|_{\leb \infty} \nn\\
    &\pe  + C_2 \sup_{s \in (0, t)} \left\|
              u(\cdot, s) \Delta \varphi + u(\cdot, s) \nabla \varphi \cdot \nabla v(\cdot, s) + \kappa \varphi u(\cdot, s)
            \right\|_{\leb {\lambda}}
            \int_0^t \left(1 + (t-s)^{-\frac{2}{2\lambda}}\right) \,\mathrm{d}s\nn\\
    &\pe  + C_2 \sup_{s \in (0, t)} \left\|
              2u(\cdot, s) \nabla \varphi + \varphi u(\cdot, s) \nabla v(\cdot, s) 
            \right\|_{\leb{\lambda}}
            \int_0^t \left(1 + (t-s)^{-\frac12 - \frac{2}{2\lambda}}\right) \,\mathrm{d}s
  \end{align}
  holds for all $t \in (0, T)$, where we also used that $\varphi\leq 1$ in $\Omega$.
Herein, we make use of \eqref{eq:cutoff:grad_est} and $\varphi\leq 1$ in $\Ombar$ to estimate
  \begin{align}\label{l_infty_bdd:est2}
          \|u(\cdot, s) \Delta \varphi\|_{\leb{\lambda}}
    &\le  C_\varphi \|\varphi^{1-2\eta} u(\cdot, s)\|_{\leb{\lambda}}, \\
          \|\kappa \varphi u(\cdot, s)\|_{\leb{\lambda}}\label{l_infty_bdd:est3}
    &\le  \|\kappa\|_{\con0} \|\varphi^{1-\eta} u(\cdot, s)\|_{\leb{\lambda}},
  \intertext{and}\label{l_infty_bdd:est4}    
          \|u(\cdot, s) \nabla \varphi\|_{\leb{\lambda}}
    &\le  C_\varphi \|\varphi^{1-\eta} u(\cdot, s)\|_{\leb{\lambda}}
  \end{align}
  for all $s \in (0, T)$ and, similarly, this time also relying on $\varphi\equiv 0$ in $\Ombar\setminus V$ from \eqref{eq:cutoff:varphi}, Hölder's inequality and the choice of $C_1$ (cf.\ \eqref{eq:local_bound_1:est}), also
  \begin{align}\label{l_infty_bdd:est5}
   \|u(\cdot, s) \nabla \varphi \cdot \nabla v(\cdot, s)\|_{\leb{\lambda}}&\le \|u(\cdot, s) \nabla \varphi \cdot \nabla v(\cdot, s)\|_{\leb[V]{\lambda}}\nn\\
    &\le  C_\varphi \|\varphi^{1-\eta} u(\cdot, s)\|_{\leb[V]{\frac{q \lambda}{q-\lambda}}} \|\nabla v(\cdot, s)\|_{\leb[V]{q}}\nn\\
    &\le C_1 C_\varphi \|\varphi^{1-\eta} u(\cdot, s)\|_{\leb{\frac{q \lambda}{q-\lambda}}}\qquad\text{for all }s\in(0,T)
   \end{align}
   as well as
   \begin{align}\label{l_infty_bdd:est6}
          \|\varphi u(\cdot, s) \nabla v(\cdot, s)\|_{\leb \lambda} 
    &\le  \|\varphi^\eta\varphi^{1-\eta} u(\cdot, s)\|_{\leb[V]{\frac{q \lambda}{q-\lambda}}} \|\nabla v(\cdot, s)\|_{\leb[V]{q}}\nn\\
    &\le C_1 \|\varphi^{1-\eta} u(\cdot, s)\|_{\leb{\frac{q \lambda}{q-\lambda}}}\qquad\text{for all }s\in(0,T).
  \end{align}
  In light of \eqref{eq:l_infty_bdd:eta}, Lemma~\ref{lm:eta_trick} becomes applicable in all of the expressions above. In particular, we find from one application with $\lambda'=\lambda$ and $\eta'=\eta$ that there is $C_3>0$ such that 
\begin{align}\label{l_infty_bdd:est7}
\|\varphi^{1-\eta} u(\cdot, s)\|_{\leb{\lambda}}\leq C_3\|\varphi u(\cdot, s)\|_{\leb{\infty}}^{1-\eta}\qquad\text{for all }s\in(0,T),
\end{align}  
  and from another application with $\lambda'=\frac{q\lambda}{q-\lambda}\in(1,\infty)$ and $\eta'=\eta$ we also obtain $C_4>0$ satisfying
  \begin{align}\label{l_infty_bdd:est8}
  \|\varphi^{1-\eta} u(\cdot, s)\|_{\leb{\frac{q \lambda}{q-\lambda}}}\leq C_4\|\varphi u(\cdot, s)\|_{\leb{\infty}}^{1-\eta}\qquad\text{for all }s\in(0,T).
\end{align}    
Hence, we obtain from a combination of \eqref{l_infty_bdd:est1}--\eqref{l_infty_bdd:est8} that there is $C_5 \ge 1$ such that
  \begin{align*}
          \|\varphi u(\cdot, t)\|_{\leb \infty}
    &\le  C_5  \left(1 + \sup_{s \in (0, t)} \|\varphi u(\cdot, t)\|_{\leb \infty}^{1-\eta} \right) \\
    &\le  2^{\eta} C_5 \left(1 + \sup_{s \in (0, t)} \|\varphi u(\cdot, t)\|_{\leb \infty} \right)^{1-\eta}
    \qquad \text{for all $t \in (0, T)$}.
  \end{align*}
  In particular, $M \colon [0, T)\to \R, t \mapsto 1 + \sup_{s \in (0, t)} \|\varphi u(\cdot, s)\|_{\leb \infty}$, fulfils
  \begin{align*}
    M(t) \le 2^{1+\eta} C_5  M^{1-\eta}(t)
    \qquad \text{for all $t \in (0, T)$},
  \end{align*}
  which implies
  \begin{align*}
    M(t) \le (2^{1+\eta} C_5 )^\frac1\eta
    \qquad \text{for all $t \in (0, T)$},
  \end{align*}
  and thus the statement.
\end{proof}

\begin{lem}\label{lm:local_bound_2}
  Let $x_0 \in \Ombar$ with $\mu(x_0) > 0$ and $T \in (0, \tmax] \cap (0, \infty)$.
  Then there exist an open neighbourhood $U$ of $x_0$ in $\Ombar$ and $C \gt 0$ such that
  \begin{align*}
    \|u(\cdot, t)\|_{L^\infty(U)} \lt C 
    \qquad \text{for all $t \in (0, T)$}.
  \end{align*}
\end{lem}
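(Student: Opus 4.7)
The plan is to obtain this statement as an essentially direct corollary of Lemma~\ref{lm:l_infty_bdd}. That lemma already provides an open neighbourhood $V$ of $x_0$ in $\Ombar$ and, for every compact $K \subset V$, a cut-off function $\varphi$ satisfying \eqref{eq:cutoff:varphi} together with a time-uniform bound on $\|\varphi u(\cdot, t)\|_{L^\infty(\Omega)}$. The only remaining task is to translate the pointwise-weighted bound $\|\varphi u\|_{L^\infty}$ into an unweighted bound on a genuinely open set around $x_0$, which we achieve by arranging for $\varphi$ to equal $1$ on an entire neighbourhood of $x_0$.

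Concretely, I would first apply Lemma~\ref{lm:l_infty_bdd} to fix the set $V$ and then choose an open neighbourhood $U$ of $x_0$ in $\Ombar$ whose closure $K \defs \overline{U}$ (taken in $\Ombar$) is still contained in $V$; since $V$ is open in $\Ombar$ and $\Ombar$ is locally compact, such a $U$ clearly exists. With this specific compact set $K$, a second invocation of Lemma~\ref{lm:l_infty_bdd} yields $\varphi \in C^2(\Ombar)$ with $\varphi \equiv 1$ on $K$ and a constant $C \gt 0$ so that $\|\varphi u(\cdot, t)\|_{L^\infty(\Omega)} \le C$ for all $t \in (0, T)$.

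Because $U \subset K$ and $\varphi \equiv 1$ on $K$, the identity $u = \varphi u$ holds pointwise on $U$, and hence
\begin{align*}
  \|u(\cdot, t)\|_{L^\infty(U)}
  = \|\varphi u(\cdot, t)\|_{L^\infty(U)}
  \le \|\varphi u(\cdot, t)\|_{L^\infty(\Omega)}
  \le C
\end{align*}
for every $t \in (0, T)$, which is the desired estimate. There is no real obstacle here: all the analytic work has been done in Lemma~\ref{lm:l_infty_bdd}, and the present lemma only records the (topologically trivial) step of shrinking the domain so that the cut-off becomes constant $1$ on an open set around $x_0$.
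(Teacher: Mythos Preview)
Your proposal is correct and matches the paper's own proof essentially verbatim: the paper also fixes the set $V$ from Lemma~\ref{lm:l_infty_bdd}, chooses an open neighbourhood $U$ of $x_0$ with $K \defs \bar U \subset V$, obtains $\varphi$ and $C$ from Lemma~\ref{lm:l_infty_bdd} for this $K$, and concludes via $\varphi u = u$ on $U$. The only cosmetic difference is that you phrase this as two invocations of the lemma, whereas it is really a single application (the lemma's conclusion already quantifies over all compact $K \subset V$).
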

\begin{proof}
  With $V$ as given by Lemma~\ref{lm:l_infty_bdd}, we choose an open neighbourhood $U$ of $x_0$ with $K \defs \bar U \subset V$.
  We are then able to apply Lemma~\ref{lm:l_infty_bdd} to obtain $\varphi \in C^\infty(\Ombar)$ and $C \gt 0$ such that \eqref{eq:cutoff:varphi} and \eqref{eq:l_infty_bdd:est} hold.
  This already entails the statement since $\varphi u = u$ in $U \times (0, T)$.
\end{proof}

\section{Conclusion: Proofs of Theorem~\ref{th:main} and Theorem~\ref{th:blowup_points}}
\begin{proof}[Proof of Theorem~\ref{th:main}]
  See Lemma~\ref{lm:local_bound_2}.
\end{proof}

\begin{proof}[Proof of Theorem~\ref{th:blowup_points}]
  Suppose $(u, v)$ is a solution of \eqref{system} blowing up at $\tmax < \infty$ and let $x_0 \in \Ombar$ be such that $\mu(x) > 0$.
  Theorem~\ref{th:main} then directly asserts that $x_0$ does not belong to the blow-up set.
\end{proof}

\section*{Acknowledgments}
T.B. acknowledges support of the \emph{Deutsche Forschungsgemeinschaft} \ in the context of the project \emph{Emergence of structures and advantages in
cross-diffusion systems} \  (No.\ 411007140, GZ: WI 3707/5-1).

%\bibliography{bib}

\begin{thebibliography}{10}
\setlength{\itemsep}{0.2pt}

\bibitem{ArumugamEtAlGlobalExistenceSolutions2021}
\textsc{Arumugam, G.}, \textsc{Dond, A.~K.}, and \textsc{Erhardt, A.~H.}:
\newblock {\em Global existence of solutions to {{Keller}}--{{Segel}}
  chemotaxis system with heterogeneous logistic source and nonlinear
  secretion}.
\newblock Preprint, \href{https://arxiv.org/abs/2105.12596}{ arXiv:2105.12596},
  2021.

\bibitem{BellomoEtAlMathematicalTheoryKeller2015}
\textsc{Bellomo, N.}, \textsc{Bellouquid, A.}, \textsc{Tao, Y.}, and
  \textsc{Winkler, M.}:
\newblock {\em Toward a mathematical theory of {{Keller}}\textendash{{Segel}}
  models of pattern formation in biological tissues}.
\newblock Math. Models Methods Appl. Sci.,
  \href{https://doi.org/10.1142/S021820251550044X}{25(09):1663--1763}, 2015.
\newblock

\bibitem{Biler_JAMA99}
\textsc{Biler, P.}:
\newblock {\em Global solutions to some parabolic--elliptic systems of
  chemotaxis}.
\newblock Adv. Math. Sci. Appl., 9(1):347--359, 1999.

\bibitem{BlackEtAlRelaxedParameterConditions2021}
\textsc{Black, T.}, \textsc{Fuest, M.}, and \textsc{Lankeit, J.}:
\newblock {\em Relaxed parameter conditions for chemotactic collapse in
  logistic-type parabolic--elliptic {{Keller}}--{{Segel}} systems}.
\newblock Z. F\"ur Angew. Math. Phys.,
  \href{https://doi.org/10.1007/s00033-021-01524-8}{72(3):Art.~96}, 2021.
\newblock

\bibitem{BrezisStrauss73}
\textsc{Brezis, H.} and \textsc{A.~Strauss, W.}:
\newblock {\em Semi-linear second-order elliptic equations in {{L}}{$^{1}$}}.
\newblock J. Math. Soc. Japan,
  \href{https://doi.org/10.2969/jmsj/02540565}{25(4):565--590}, 1973.
\newblock

\bibitem{ChaplainLolasMathematicalModellingCancer2005}
\textsc{Chaplain, M. A.~J.} and \textsc{Lolas, G.}:
\newblock {\em Mathematical modelling of cancer cell invasion of tissue: the
  role of the urokinase plasminogen activation system}.
\newblock Math. Models Methods Appl. Sci.,
  \href{https://doi.org/10.1142/S0218202505000947}{15(11):1685--1734}, 2005.
\newblock

\bibitem{DockeryEtAlEvolutionSlowDispersal1998}
\textsc{Dockery, J.}, \textsc{Hutson, V.}, \textsc{Mischaikow, K.}, and
  \textsc{Pernarowski, M.}:
\newblock {\em The evolution of slow dispersal rates: a reaction diffusion
  model}.
\newblock J. Math. Biol.,
  \href{https://doi.org/10.1007/s002850050120}{37(1):61--83}, 1998.
\newblock

\bibitem{friedman2011partial}
\textsc{Friedman, A.}:
\newblock {\em Partial Differential Equations}.
\newblock Dover {{Books}} on {{Mathematics}}. {Dover Publications}, 2011.

\bibitem{FuestFinitetimeBlowupTwodimensional2020}
\textsc{Fuest, M.}:
\newblock {\em Finite-time blow-up in a two-dimensional
  {{Keller}}\textendash{{Segel}} system with an environmental dependent
  logistic source}.
\newblock Nonlinear Anal. Real World Appl.,
  \href{https://doi.org/10.1016/j.nonrwa.2019.103022}{52:103022}, 2020.
\newblock

\bibitem{FuestApproachingOptimalityBlowup2021}
\textsc{Fuest, M.}:
\newblock {\em Approaching optimality in blow-up results for
  {{Keller}}\textendash{{Segel}} systems with logistic-type dampening}.
\newblock Nonlinear Differ. Equ. Appl. NoDEA,
  \href{https://doi.org/10.1007/s00030-021-00677-9}{28(2):16}, 2021.
\newblock

\bibitem{herreroSingularityPatternsChemotaxis1996}
\textsc{Herrero, M.~A.} and \textsc{Vel{\'a}zquez, J. J.~L.}:
\newblock {\em Singularity patterns in a chemotaxis model}.
\newblock Math. Ann.,
  \href{https://doi.org/10.1007/BF01445268}{306(1):583--623}, 1996.
\newblock

\bibitem{HerreroVelazquezBlowupMechanismChemotaxis1997}
\textsc{Herrero, M.~A.} and \textsc{Vel{\'a}zquez, J. J.~L.}:
\newblock {\em A blow-up mechanism for a chemotaxis model}.
\newblock Ann. Scuola Norm. Sup. Pisa Cl. Sci. (4), 24(4):633--683 (1998),
  1997.

\bibitem{HillenPainterUserGuidePDE2009}
\textsc{Hillen, T.} and \textsc{Painter, K.~J.}:
\newblock {\em A user's guide to {{PDE}} models for chemotaxis}.
\newblock J. Math. Biol.,
  \href{https://doi.org/10.1007/s00285-008-0201-3}{58(1-2):183--217}, 2009.
\newblock

\bibitem{JL92}
\textsc{J{\"a}ger, W.} and \textsc{Luckhaus, S.}:
\newblock {\em On explosions of solutions to a system of partial differential
  equations modelling chemotaxis}.
\newblock Trans. Amer. Math. Soc., 329(2):819--824, 1992.

\bibitem{KangStevensBlowupGlobalSolutions2016}
\textsc{Kang, K.} and \textsc{Stevens, A.}:
\newblock {\em Blowup and global solutions in a chemotaxis\textendash growth
  system}.
\newblock Nonlinear Anal. Theory Methods Appl.,
  \href{https://doi.org/10.1016/j.na.2016.01.017}{135:57--72}, 2016.
\newblock

\bibitem{LankeitChemotaxisCanPrevent2015}
\textsc{Lankeit, J.}:
\newblock {\em Chemotaxis can prevent thresholds on population density}.
\newblock Discrete Contin. Dyn. Syst. - Ser. B,
  \href{https://doi.org/10.3934/dcdsb.2015.20.1499}{20(5):1499--1527}, 2015.
\newblock

\bibitem{LankeitEventualSmoothnessAsymptotics2015}
\textsc{Lankeit, J.}:
\newblock {\em Eventual smoothness and asymptotics in a three-dimensional
  chemotaxis system with logistic source}.
\newblock J. Differ. Equ.,
  \href{https://doi.org/10.1016/j.jde.2014.10.016}{258(4):1158--1191}, 2015.
\newblock

\bibitem{LankeitImmediateSmoothingGlobal2020}
\textsc{Lankeit, J.}:
\newblock {\em Immediate smoothing and global solutions for initial data in
  {$L^1\times W^{1,2}$} in a {K}eller-{S}egel system with logistic terms in
  2{D}}.
\newblock Proc. Roy. Soc. Edinburgh Sect. A,
  \href{https://doi.org/10.1017/prm.2020.55}{151(4):1204--1224}, 2021.
\newblock

\bibitem{LankeitWinklerFacingLowRegularity2019}
\textsc{Lankeit, J.} and \textsc{Winkler, M.}:
\newblock {\em Facing low regularity in chemotaxis systems}.
\newblock Jahresber. Dtsch. Math.-Ver.,
  \href{https://doi.org/10.1365/s13291-019-00210-z}{122:35--64}, 2019.
\newblock

\bibitem{MizoguchiSouplet}
\textsc{Mizoguchi, N.} and \textsc{Souplet, P.}:
\newblock {\em Nondegeneracy of blow-up points for the parabolic
  {K}eller-{S}egel system}.
\newblock Ann. Inst. H. Poincar\'{e} Anal. Non Lin\'{e}aire,
  \href{https://doi.org/10.1016/j.anihpc.2013.07.007}{31(4):851--875}, 2014.
\newblock

\bibitem{Nagai_95}
\textsc{Nagai, T.}:
\newblock {\em Blow-up of radially symmetric solutions to a chemotaxis system}.
\newblock Adv. Math. Sci. Appl., 5(2):581--601, 1995.

\bibitem{NagaiBlowupNonradialSolutions2001}
\textsc{Nagai, T.}:
\newblock {\em Blowup of nonradial solutions to parabolic--elliptic systems
  modeling chemotaxis in two-dimensional domains}.
\newblock J. Inequal. Appl.,
  \href{https://doi.org/10.1155/S1025583401000042}{6(1):37--55}, 2001.
\newblock

\bibitem{nagaiGlobalExistenceBlowup2001}
\textsc{Nagai, T.}:
\newblock {\em Global existence and blowup of solutions to a chemotaxis
  system}.
\newblock Nonlinear Anal. Theory Methods Appl.,
  \href{https://doi.org/10.1016/S0362-546X(01)00222-X}{47(2):777--787}, 2001.
\newblock

\bibitem{NagainSenba_98}
\textsc{Nagai, T.} and \textsc{Senba, T.}:
\newblock {\em Global existence and blow-up of radial solutions to a
  parabolic--elliptic system of chemotaxis}.
\newblock Adv. Math. Sci. Appl., 8(1):145--156, 1998.

\bibitem{NagaiSenbaSuzuki}
\textsc{Nagai, T.}, \textsc{Senba, T.}, and \textsc{Suzuki, T.}:
\newblock {\em Chemotactic collapse in a parabolic system of mathematical
  biology}.
\newblock Hiroshima Math. J.,
  \href{http://projecteuclid.org/euclid.hmj/1206124609}{30(3):463--497}, 2000.

\bibitem{OsakiEtAlExponentialAttractorChemotaxisgrowth2002}
\textsc{Osaki, K.}, \textsc{Tsujikawa, T.}, \textsc{Yagi, A.}, and
  \textsc{Mimura, M.}:
\newblock {\em Exponential attractor for a chemotaxis-growth system of
  equations}.
\newblock Nonlinear Anal. Theory Methods Appl.,
  \href{https://doi.org/10.1016/S0362-546X(01)00815-X}{51(1):119--144}, 2002.
\newblock

\bibitem{PainterEtAlDevelopmentApplicationsModel2000}
\textsc{Painter, K.}, \textsc{Maini, P.}, and \textsc{Othmer, H.}:
\newblock {\em Development and applications of a model for cellular response to
  multiple chemotactic cues}.
\newblock J. Math. Biol.,
  \href{https://doi.org/10.1007/s002850000035}{41(4):285--314}, 2000.
\newblock

\bibitem{PainterHillenPhysicaD2011}
\textsc{Painter, K.~J.} and \textsc{Hillen, T.}:
\newblock {\em Spatio-temporal chaos in a chemotaxis model}.
\newblock Phys. Nonlinear Phenom., 240(4-5):363--375, 2011.

\bibitem{SalakoShenParabolicellipticChemotaxisModel2018b}
\textsc{Salako, R.~B.} and \textsc{Shen, W.}:
\newblock {\em Parabolic--elliptic chemotaxis model with space-time dependent
  logistic sources on $\mathbb{R}^n$. {{I}}. {{Persistence}} and asymptotic
  spreading}.
\newblock Math. Models Methods Appl. Sci.,
  \href{https://doi.org/10.1142/S0218202518400146}{28(11):2237--2273}, 2018.
\newblock

\bibitem{SalakoShenParabolicellipticChemotaxisModel2018a}
\textsc{Salako, R.~B.} and \textsc{Shen, W.}:
\newblock {\em Parabolic--elliptic chemotaxis model with space-time dependent
  logistic sources on $\mathbb{R}^n$. {{II}}. {{Existence}}, uniqueness, and
  stability of strictly positive entire solutions}.
\newblock J. Math. Anal. Appl.,
  \href{https://doi.org/10.1016/j.jmaa.2018.04.034}{464(1):883--910}, 2018.
\newblock

\bibitem{SalakoShenParabolicellipticChemotaxisModel2022}
\textsc{Salako, R.~B.} and \textsc{Shen, W.}:
\newblock {\em Parabolic--elliptic chemotaxis model with space-time dependent
  logistic sources on $\mathbb{R}^n$. {{III}}. {{Transition}} fronts}.
\newblock J. Dynam. Differential Equations,
  \href{https://doi.org/10.1007/s10884-020-09901-z}{34(1):209--238}, 2022.
\newblock

\bibitem{SenbaSuzukiChemotacticCollapseParabolicelliptic2001}
\textsc{Senba, T.} and \textsc{Suzuki, T.}:
\newblock {\em Chemotactic collapse in a parabolic--elliptic system of
  mathematical biology}.
\newblock Adv Differ. Equ., 6(1):21--50, 2001.

\bibitem{ShigesadaEtAlSpatialSegregationInteracting1979}
\textsc{Shigesada, N.}, \textsc{Kawasaki, K.}, and \textsc{Teramoto, E.}:
\newblock {\em Spatial segregation of interacting species}.
\newblock J. Theor. Biol.,
  \href{https://doi.org/10.1016/0022-5193(79)90258-3}{79(1):83--99}, 1979.
\newblock

\bibitem{SugiyamaVarepsilonregularityTheorem2010}
\textsc{Sugiyama, Y.}:
\newblock {\em $\varepsilon$-regularity theorem and its application to the
  blow-up solutions of {{Keller}}--{{Segel}} systems in higher dimensions}.
\newblock J. Math. Anal. Appl.,
  \href{https://doi.org/10.1016/j.jmaa.2009.11.019}{364(1):51--70}, 2010.
\newblock

\bibitem{TanakaBlowupQuasilinearParabolic2022}
\textsc{Tanaka, Y.}:
\newblock {\em Blow-up in a quasilinear parabolic--elliptic {{Keller--Segel}}
  system with logistic source}.
\newblock Nonlinear Anal. Real World Appl.,
  \href{https://doi.org/10.1016/j.nonrwa.2021.103396}{63:103396}, 2022.
\newblock

\bibitem{TanakaBoundednessFinitetimeBlowup2022}
\textsc{Tanaka, Y.}:
\newblock {\em Boundedness and finite-time blow-up in a quasilinear
  parabolic--elliptic chemotaxis system with logistic source and nonlinear
  production}.
\newblock J. Math. Anal. Appl.,
  \href{https://doi.org/10.1016/j.jmaa.2021.125654}{506(2):125654}, 2022.
\newblock

\bibitem{TaoWinklerBoundednessQuasilinearParabolic2012}
\textsc{Tao, Y.} and \textsc{Winkler, M.}:
\newblock {\em Boundedness in a quasilinear parabolic\textendash parabolic
  {{Keller}}\textendash{{Segel}} system with subcritical sensitivity}.
\newblock J. Differ. Equ.,
  \href{https://doi.org/10.1016/j.jde.2011.08.019}{252(1):692--715}, 2012.
\newblock

\bibitem{TelloWinklerChemotaxisSystemLogistic2007}
\textsc{Tello, J.~I.} and \textsc{Winkler, M.}:
\newblock {\em A chemotaxis system with logistic source}.
\newblock Commun. Partial Differ. Equ.,
  \href{https://doi.org/10.1080/03605300701319003}{32(6):849--877}, 2007.
\newblock

\bibitem{viglialoroVeryWeakGlobal2016}
\textsc{Viglialoro, G.}:
\newblock {\em Very weak global solutions to a parabolic\textendash parabolic
  chemotaxis-system with logistic source}.
\newblock J. Math. Anal. Appl.,
  \href{https://doi.org/10.1016/j.jmaa.2016.02.069}{439(1):197--212}, 2016.
\newblock

\bibitem{WinklerChemotaxisLogisticSource2008}
\textsc{Winkler, M.}:
\newblock {\em Chemotaxis with logistic source: {{Very}} weak global solutions
  and their boundedness properties}.
\newblock J. Math. Anal. Appl.,
  \href{https://doi.org/10.1016/j.jmaa.2008.07.071}{348(2):708--729}, 2008.
\newblock

\bibitem{WinklerAggregationVsGlobal2010}
\textsc{Winkler, M.}:
\newblock {\em Aggregation vs. global diffusive behavior in the
  higher-dimensional {{Keller}}\textendash{{Segel}} model}.
\newblock J. Differ. Equ.,
  \href{https://doi.org/10.1016/j.jde.2010.02.008}{248(12):2889--2905}, 2010.
\newblock

\bibitem{WinklerBoundednessHigherdimensionalParabolic2010}
\textsc{Winkler, M.}:
\newblock {\em Boundedness in the higher-dimensional parabolic--parabolic
  chemotaxis system with logistic source}.
\newblock Commun. Partial Differ. Equ.,
  \href{https://doi.org/10.1080/03605300903473426}{35(8):1516--1537}, 2010.
\newblock

\bibitem{WinklerBlowupHigherdimensionalChemotaxis2011}
\textsc{Winkler, M.}:
\newblock {\em Blow-up in a higher-dimensional chemotaxis system despite
  logistic growth restriction}.
\newblock J. Math. Anal. Appl.,
  \href{https://doi.org/10.1016/j.jmaa.2011.05.057}{384(2):261--272}, 2011.
\newblock

\bibitem{WinklerHowFarCan2014}
\textsc{Winkler, M.}:
\newblock {\em How far can chemotactic cross-diffusion enforce exceeding
  carrying capacities?}
\newblock J. Nonlinear Sci.,
  \href{https://doi.org/10.1007/s00332-014-9205-x}{24(5):809--855}, 2014.
\newblock

\bibitem{WinklerEmergenceLargePopulation2017}
\textsc{Winkler, M.}:
\newblock {\em Emergence of large population densities despite logistic growth
  restrictions in fully parabolic chemotaxis systems}.
\newblock Discrete Contin. Dyn. Syst. - Ser. B,
  \href{https://doi.org/10.3934/dcdsb.2017135}{22(7):2777--2793}, 2017.
\newblock

\bibitem{WinklerFinitetimeBlowupLowdimensional2018}
\textsc{Winkler, M.}:
\newblock {\em Finite-time blow-up in low-dimensional
  {{Keller}}\textendash{{Segel}} systems with logistic-type superlinear
  degradation}.
\newblock Z. F\"ur Angew. Math. Phys.,
  \href{https://doi.org/10.1007/s00033-018-0935-8}{69(2):Art.~40}, 2018.
\newblock

\bibitem{WinklerRoleSuperlinearDamping2019}
\textsc{Winkler, M.}:
\newblock {\em The role of superlinear damping in the construction of solutions
  to drift-diffusion problems with initial data in ${L}^1$}.
\newblock Adv. Nonlinear Anal.,
  \href{https://doi.org/10.1515/anona-2020-0013}{9(1):526--566}, 2019.
\newblock

\bibitem{WinklerSolutionsParabolicKeller2020}
\textsc{Winkler, M.}:
\newblock {\em ${L}^1$ solutions to parabolic {K}eller–{S}egel systems
  involving arbitrary superlinear degradation}.
\newblock Ann. Della Scuola Norm. Super. Pisa Cl. Sci. Ser. V,
  \href{https://doi.org/10.2422/2036-2145.202005_016}{pages 1--27}, to appear.
\newblock

\bibitem{WoodwardEtAlSpatiotemporalPatternsGenerated1995}
\textsc{Woodward, D.}, \textsc{Tyson, R.}, \textsc{Myerscough, M.},
  \textsc{Murray, J.}, \textsc{Budrene, E.}, and \textsc{Berg, H.}:
\newblock {\em Spatio-temporal patterns generated by {{Salmonella}}
  typhimurium}.
\newblock Biophys. J.,
  \href{https://doi.org/10.1016/S0006-3495(95)80400-5}{68(5):2181--2189}, 1995.
\newblock

\bibitem{XiangChemotacticAggregationLogistic2018}
\textsc{Xiang, T.}:
\newblock {\em Chemotactic aggregation versus logistic damping on boundedness
  in the {{3D}} minimal {{Keller}}--{{Segel}} model}.
\newblock SIAM J. Appl. Math.,
  \href{https://doi.org/10.1137/17M1150475}{78(5):2420--2438}, 2018.
\newblock

\bibitem{XiangHowStrongLogistic2018}
\textsc{Xiang, T.}:
\newblock {\em How strong a logistic damping can prevent blow-up for the
  minimal {{Keller}}\textendash{{Segel}} chemotaxis system?}
\newblock J. Math. Anal. Appl.,
  \href{https://doi.org/10.1016/j.jmaa.2017.11.022}{459(2):1172--1200}, 2018.
\newblock

\bibitem{YanFuestWhenKellerSegel2021}
\textsc{Yan, J.} and \textsc{Fuest, M.}:
\newblock {\em When do {{Keller}}\textendash{{Segel}} systems with
  heterogeneous logistic sources admit generalized solutions?}
\newblock Discrete Contin. Dyn. Syst. - B,
  \href{https://doi.org/10.3934/dcdsb.2020275}{26(8):4093--4109}, 2021.
\newblock

\end{thebibliography}
\footnotesize

\end{document}